\documentclass[12pt,leqno]{amsart}
\usepackage[all]{xy}
\usepackage{amssymb,amsmath,amsthm,mathrsfs}
\usepackage{cases}
\usepackage{enumerate}
\usepackage{hyperref}
\usepackage{bm}
\usepackage{dsfont}

\numberwithin{equation}{section}
\allowdisplaybreaks[3]
\setlength{\textheight}{220mm} \setlength{\textwidth}{155mm}
\setlength{\oddsidemargin}{1.25mm}
\setlength{\evensidemargin}{1.25mm} \setlength{\topmargin}{0mm}

\makeatother
\newtheorem{thm}{Theorem}[section]
\newtheorem{Con}[thm]{Conjecture}

\newtheorem{lem}[thm]{Lemma}

\theoremstyle{definition}

\numberwithin{equation}{section}

\theoremstyle{definition}
\newtheorem*{ack}{Acknowledgements}

\newcommand{\eps}{\varepsilon}

\newcommand{\si}{\sigma}

\newcommand{\re}{\textup{Re}}
\newcommand{\im}{\textup{Im}}

\newcommand{\abs}[1]{\left\lvert#1\right\rvert}

\begin{document}
\title[Moments of quadratic Dirichlet character sums]{Moments of quadratic Dirichlet character sums} 
\author[Y. Toma]{Yuichiro Toma}
\address{Graduate School of Mathematics, Nagoya University, Chikusa-ku, Nagoya 464-8602, Japan.}
\email{m20034y@math.nagoya-u.ac.jp}

\makeatletter
\@namedef{subjclassname@2020}{\textup{2020} Mathematics Subject Classification}
\makeatother
\subjclass[2020]{11N37, 11L40}
\keywords{Quadratic Dirichlet characters, Jutila's conjecture, multivariable Tauberian Theorems, shifted moments}
\maketitle

\begin{abstract}
We consider moments of higher powers of quadratic Dirichlet character sums. In a restricted region, we give their asymptotic behavior by using de la Bret\`{e}che's multivariable Tauberian theorem. We also give the lower bound of the exponent of $\log$ factor in the conjecture of Jutila. As an application, we give a lower bound of a weighted average of shifted moments of quadratic Dirichlet $L$-functions.
\end{abstract} 

\section{Introduction}
Dirichlet character sums have been a central topic in analytic number theory and relate to many number theoretical
problems. Let $q \geq 2$ and $\chi$ be a non-principal Dirichlet character modulo $q$. Let
\[
M_q(\chi):=\max_{Y \leq q}\abs{\sum_{n \leq Y}\chi(n)}.
\]
In 1918, P\'{o}lya and Vinogradov independently obtained the best-known upper bound
\begin{align*}
    M_q(\chi) \ll \sqrt{q}\log q.
\end{align*}
Under the Generalized Riemann Hypothesis (GRH), Montgomery and Vaughan~\cite{MV77} improved the above bound to 
\[
M_q(\chi) \ll \sqrt{q}\log\log q.
\]
This bound is best possible since Paley~\cite{P} showed that there exists an infinite family of primitive quadratic characters $\chi$ mod $q$ such that
\[
M_q(\chi) \gg \sqrt{q}\log\log q.
\]
According to the parity of order of characters, the bounds of character sums are studied by several mathematicians, for example Granville and Soundararajan~\cite{GS07}, Goldmakher and Lamzouri~\cite{GL12, GL14}, Goldmakher~\cite{Gold12}, Lamzouri~\cite{L17} and Lamzouri and Mangerel~\cite{LM22}.

\subsection{Moments of quadratic character sums}
We consider moments of quadratic character sums. Let $S(X)$ denote the set of all non-principal quadratic Dirichlet characters of modulus at most $X$. In~\cite{Ju2}, Jutila made the following conjecture concerning higher powers of character sums.

\begin{Con}[Jutila]
\label{Jutila conjecture}
For all $k = 1, 2,\dots$ and $X \geq 3, Y \geq 1$, the estimate
    \begin{align}
\label{S_k}
    S_k(X,Y) := \sum_{\chi \in S(X)} \abs{\sum_{n \leq Y}\chi(n)}^{2k} \leq c_1(k)XY^k (\log X)^{c_2(k)}
\end{align}
holds with certain coefficients $c_1(k), c_2(k)$ depending on only $k$.
\end{Con}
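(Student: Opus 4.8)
The plan is to open the $2k$-th power, interchange the order of summation, and evaluate the resulting character sum over discriminants. One may first dispose of the range $Y\geq X$: since every $\chi\in S(X)$ has conductor $q\leq X$, P\'olya--Vinogradov gives $\abs{\sum_{n\leq Y}\chi(n)}\leq M_q(\chi)\ll\sqrt{X}\log X$, whence $S_k(X,Y)\ll X(\sqrt{X}\log X)^{2k}=X^{k+1}(\log X)^{2k}\leq XY^k(\log X)^{2k}$, which is of the required shape. So assume $Y<X$; then $(\log Y)^A\leq(\log X)^A$ and it suffices to bound $S_k(X,Y)$ by $c_1(k)XY^k(\log Y)^{c_2(k)}$. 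A standard sieving argument reduces the sum over $S(X)$, up to acceptable error, to a sum over Kronecker symbols $\bigl(\tfrac{d}{\cdot}\bigr)$ with $d$ a fundamental discriminant, $\abs{d}\leq X$. Since these characters are real, $\abs{\sum_{n\leq Y}\chi(n)}^{2k}=\sum_{\bm n\in[1,Y]^{2k}}\chi(n_1\cdots n_{2k})$, and interchanging summations leaves us to estimate $\sum_{\bm n\in[1,Y]^{2k}}\sum_{\abs{d}\leq X}\bigl(\tfrac{d}{n_1\cdots n_{2k}}\bigr)$ with $n_1\cdots n_{2k}\leq Y^{2k}$.

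Smoothing the cutoff $\abs{d}\leq X$ and applying Poisson summation modulo $n_1\cdots n_{2k}$ to $d\mapsto\bigl(\tfrac{d}{n_1\cdots n_{2k}}\bigr)$, the zero frequency contributes a main term of size $\asymp X$ (times a bounded multiplicative weight) precisely when $n_1\cdots n_{2k}$ is a perfect square and nothing otherwise, while the nonzero frequencies contribute a dual sum supported on $1\leq\ell\ll(n_1\cdots n_{2k})/X$ with Gauss-sum coefficients of modulus $\ll\sqrt{n_1\cdots n_{2k}}$. The main term is bounded by counting square products: crudely
\[
X\sum_{\substack{\bm n\in[1,Y]^{2k}\\ n_1\cdots n_{2k}=\square}}1\ \leq\ X\sum_{u\leq Y^k}d_{2k}(u^2)\ \ll\ XY^k(\log Y)^{k(2k+1)-1},
\]
because $d_{2k}(u^2)$ is multiplicative with $d_{2k}(p^2)=k(2k+1)$; this already yields \eqref{S_k} with the admissible exponent $c_2(k)=2k^2+k-1$. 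The sharp exponent comes from the multiple Dirichlet series
\[
\sum_{n_1\cdots n_{2k}=\square}\frac{1}{n_1^{s_1}\cdots n_{2k}^{s_{2k}}}=\prod_{i=1}^{2k}\zeta(2s_i)\prod_{1\leq i<j\leq 2k}\zeta(s_i+s_j)\cdot G(\bm s),
\]
with $G$ holomorphic in a wider region: all $2k+\binom{2k}{2}$ polar hyperplanes pass through $\bigl(\tfrac12,\dots,\tfrac12\bigr)$, so de la Bret\`eche's multivariable Tauberian theorem gives the count $\sim cY^k(\log Y)^{2k^2-k}$, i.e. the conjecturally optimal value $c_2(k)=2k^2-k$.

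The remaining --- and principal --- difficulty is the dual sum. When $n_1\cdots n_{2k}\leq X$ it is empty, so for $Y\ll X^{1/(2k)}$ the formula above is in fact an honest asymptotic for $S_k(X,Y)$; this is essentially the range in which the method runs unconditionally. For larger $Y$ one must control
\[
X\sum_{\substack{\bm n\in[1,Y]^{2k}\\ X<n_1\cdots n_{2k}\leq Y^{2k}}}\frac{1}{\sqrt{n_1\cdots n_{2k}}}\sum_{1\leq\ell\ll n_1\cdots n_{2k}/X}\bigl\lvert G_\ell(n_1\cdots n_{2k})\bigr\rvert,
\]
and a trivial estimate of it is far too lossy. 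Heath-Brown's quadratic large sieve does a bit better: writing $\abs{\sum_{n\leq Y}\chi(n)}^{2k}=\bigl\lvert\sum_{m\leq Y^k}d_k^{(Y)}(m)\chi(m)\bigr\rvert^{2}$ gives $S_k(X,Y)\ll_{\eps}(XY^k)^{\eps}(X+Y^k)\sum_{m\leq Y^k}d_k(m)^2$, which for $Y\leq X^{1/k}$ is $\ll_{\eps}X^{1+\eps}Y^{k}$ --- within $X^{\eps}$ of \eqref{S_k}, but with neither the clean power of $\log$ nor any information beyond $Y=X^{1/k}$. I expect this to be the crux: obtaining \eqref{S_k} for the full range of $Y$ with only a power-of-logarithm loss appears to require a genuinely sharper handle on high moments of quadratic character sums than is currently available, which is why the estimate remains conjectural for $k\geq2$.
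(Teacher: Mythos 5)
This statement is Jutila's conjecture; the paper states it as a conjecture and contains no proof of it, and you correctly do not claim one --- your closing admission that the full range of $Y$ is out of reach is exactly the state of the art reflected in the paper. What you do prove (in sketch) for restricted $Y$ is essentially the paper's actual content: the paper's Theorems \ref{thm:1} and \ref{asymptotic for S_k(X,Y)} establish $S_k(X,Y)\asymp XY^k(\log X)^{2k^2-k}$ for $Y$ small compared to $X$, by opening the $2k$-th power, summing the real character over fundamental discriminants (via the Granville--Soundararajan estimates of Lemma \ref{lem:GranvilleSoundararajan}, which play the role of your Poisson-summation step: main term of size $\asymp X$ times a multiplicative weight exactly when $n_1\cdots n_{2k}$ is a square, error terms otherwise), and then counting the square products via the same multiple Dirichlet series factorization $\prod_i\zeta(2s_i)\prod_{i<j}\zeta(s_i+s_j)\cdot E(\bm{s})$ and de la Bret\`eche's Tauberian theorem, giving the exact degree $2k^2-k$. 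Your identification of $2k^2-k$ as the forced lower bound for $c_2(k)$ is precisely the paper's main point. Two small cautions on your sketch: with the sharp cutoff $\abs{d}\leq X$ the non-square error terms are of size $X^{1/2}(n_1\cdots n_{2k})^{1/4}\log(2n_1\cdots n_{2k})$, which is why the paper's unconditional asymptotic holds only for $Y\ll\left(X/(\log X)^2\right)^{1/(3k)}$ rather than your more optimistic $Y\ll X^{1/(2k)}$ (the latter would require a smoothed cutoff in $d$, as in Gao--Zhao); and your crude bound $c_2(k)\leq 2k^2+k-1$ from $d_{2k}(u^2)$ only controls the diagonal main term, not $S_k(X,Y)$ itself for large $Y$, so it should not be read as an unconditional case of \eqref{S_k} beyond the restricted range.
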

The case $k = 1$ was first studied by Jutila~\cite{Ju1}, and he showed that $S_1(X,Y) \ll XY(\log X)^8$.  Armon~\cite{Ar} refined Jutila's result to $S_1(X,Y) \ll XY\log X$. In \cite{Vi02}, Virtanen established a weaker version of Conjecture \ref{Jutila conjecture} for the case $k = 2$ that $S_2(X,Y) \ll X^{1+\eps}Y^2$ for any $\eps>0$. 

For more general cases $k \geq 0$ to be a real number, Gao and Zhao~\cite{GZ23} considered a smoothed version. They treated the following primitive character sum and showed under the GRH that for large $X, Y$ and any real $k \geq 0$
\begin{align}
\sum_{\substack{0<d\leq X \\ d:\text{odd, squarefree}}}\abs{\sum_{n} \chi_{8d}(n)W\left(\frac{n}{Y}\right)}^{2k} &\ll \begin{cases}
    XY^k(\log X)^k & 0 \leq k<1, \\
    XY^k(\log X)^{9k-8} & 1 \leq k <2, \\
    XY^k(\log X)^{k(2k+1)} & k \geq 2 \\
\end{cases}
\end{align}
holds, where $W$ is any non-negative, smooth function compactly supported on the set of positive real numbers. Gao and Zhao also showed that the above estimate for case $0 \leq k \leq 1$ is unconditionally valid. 

On the other hand, we consider $S_k(X,Y)$ when $Y$ is `small' compared to $X$. In this situation, it becomes much simpler to estimate. However, it is still interesting because we can derive the lower bound of the exponent $c_2(k)$ of $\log X$ in (\ref{S_k}) without assuming the GRH. Hereafter, we assume that $k \in \mathbb{N}$. We prove the following.

\begin{thm}
\label{thm:1}
    For fixed small $\eps>0$ and for large $X,Y$ with $X^{\eps\frac{1}{3k}} \ll Y \ll \left(X/(\log X)^2\right)^{\frac{1}{3k}}$, we have
    \[
    S_k(X,Y) \asymp_{k,\eps} XY^{k}(\log X)^{2k^2-k}.
    \]
\end{thm}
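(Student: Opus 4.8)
The plan is to open the $2k$-th power, interchange the order of summation, and split the resulting sum over $2k$-tuples according to whether the product of the entries is a perfect square: the ``square'' tuples produce the stated main term, which I will evaluate with de la Bret\`{e}che's multivariable Tauberian theorem, while the other tuples are handled by cancellation in sums of quadratic characters. Since every $\chi\in S(X)$ is real-valued and completely multiplicative, $\abs{\sum_{n\le Y}\chi(n)}^{2k}=\sum_{n_1,\dots,n_{2k}\le Y}\chi(n_1\cdots n_{2k})$, so that
\[
S_k(X,Y)=\sum_{n_1,\dots,n_{2k}\le Y}A_X(n_1\cdots n_{2k}),\qquad A_X(m):=\sum_{\chi\in S(X)}\chi(m),
\]
and I separate the tuples with $n_1\cdots n_{2k}=\square$ from the rest.

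If $m$ is not a perfect square then, after quadratic reciprocity, $A_X(m)$ is essentially a sum of values of a fixed non-principal character of modulus $O(m)$ taken over the conductors counted by $S(X)$; a truncated M\"obius sieve (to detect those conductors) followed by the P\'olya--Vinogradov inequality yields a bound of the shape $A_X(m)\ll X^{1/2}m^{1/4}(\log 2m)^{1/2}$. Summing over the at most $Y^{2k}$ non-square tuples,
\[
\sum_{\substack{n_i\le Y\\ n_1\cdots n_{2k}\ne\square}}\abs{A_X(n_1\cdots n_{2k})}\ll X^{1/2}(\log Y)^{1/2}\Bigl(\sum_{n\le Y}n^{1/4}\Bigr)^{2k}\ll X^{1/2}Y^{5k/2}(\log Y)^{1/2},
\]
and the hypothesis $Y\ll\bigl(X/(\log X)^2\bigr)^{1/3k}$ makes the right-hand side $o\bigl(XY^{k}(\log X)^{2k^2-k}\bigr)$, negligible against the main term.

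For the square tuples, write $n_1\cdots n_{2k}=r^2$; then $A_X(r^2)$ counts the $\chi\in S(X)$ whose modulus is coprime to $r$, so $A_X(r^2)=c_0X\,h(r)+(\text{admissible error})$ with $h$ multiplicative, $0<h(r)\le 1$ and local factors $1+O(1/p)$. Factoring $n_i=d_i\ell_i^2$ with $d_i$ squarefree, the constraint $n_1\cdots n_{2k}=\square$ becomes $d_1\cdots d_{2k}=\square$; performing the elementary inner sums over the square parts (each contributing $\lfloor\sqrt{Y/d_i}\rfloor=\sqrt{Y/d_i}+O(1)$) extracts a factor $Y^{k}$ and reduces the main term to $c_0X\,Y^{k}$ times a $2k$-fold sum of $\prod_i d_i^{-1/2}$ over squarefree $d_i\le Y$ with $d_1\cdots d_{2k}=\square$, against a bounded weight $w(\mathbf d)$. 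The generating Dirichlet series $\mathcal F(\mathbf s)=\sum w(\mathbf d)\,d_1^{-s_1}\cdots d_{2k}^{-s_{2k}}$ (sum over squarefree $d_i$ with $d_1\cdots d_{2k}=\square$) factors through its Euler product as $\mathcal F(\mathbf s)=\bigl(\prod_{1\le i<j\le 2k}\zeta(s_i+s_j)\bigr)G(\mathbf s)$, where $G$ is given by an Euler product that converges and stays positive in a neighbourhood of $(\tfrac12,\dots,\tfrac12)$ (the local factors of $\mathcal F$ and of $\prod_{i<j}\zeta(s_i+s_j)$ agreeing to within $O(p^{-2})$ there, because the $d_i$ are squarefree). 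Thus the polar locus consists of exactly $\binom{2k}{2}=2k^2-k$ hyperplanes $s_i+s_j=1$, all passing through the central point, and de la Bret\`{e}che's multivariable Tauberian theorem yields that the $2k$-fold sum is $\asymp_k(\log Y)^{2k^2-k}$. Since $X^{\eps/3k}\ll Y\ll\bigl(X/(\log X)^2\bigr)^{1/3k}$ forces $\log Y\asymp_{k,\eps}\log X$, the square tuples contribute $\asymp_{k,\eps}XY^{k}(\log X)^{2k^2-k}$; combined with the off-diagonal bound this proves the theorem, and the lower bound is immediate because each $A_X(r^2)\ge 0$ and the constant supplied by de la Bret\`{e}che's theorem is positive.

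The main obstacle is the analytic input: verifying that $\mathcal F$ meets the hypotheses of de la Bret\`{e}che's theorem --- holomorphy and polynomial growth of $\mathcal F(\mathbf s)\prod_{i<j}(s_i+s_j-1)$ for $\re(s_i)>\tfrac12-\delta$ $(1\le i\le 2k)$, with polar divisor exactly $\bigcup_{i<j}\{s_i+s_j=1\}$, and positivity of $G$ at the central point (which promotes the estimate to the two-sided bound $\asymp$) --- together with a uniform-in-$Y$ treatment of the error terms (from the floors $\lfloor\sqrt{Y/d_i}\rfloor$, from the error in $A_X(r^2)$, and from the non-square tuples) over the whole range $X^{\eps/3k}\ll Y\ll\bigl(X/(\log X)^2\bigr)^{1/3k}$. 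The off-diagonal estimate, in contrast, is routine, and the combinatorial point that precisely $\binom{2k}{2}=2k^2-k$ zeta factors arise --- one for each pair of indices --- is exactly what yields the exponent $2k^2-k$ of $\log X$.
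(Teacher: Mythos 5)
Your proposal is correct and follows essentially the same route as the paper: open the $2k$-th power, bound the non-square tuples by the Granville--Soundararajan/P\'olya--Vinogradov type estimate $\ll X^{1/2}m^{1/4}(\log 2m)^{O(1)}$ (negligible since $Y^{3k}\ll X/(\log X)^2$), and evaluate the square tuples, whose weight is exactly the paper's $\prod_{p\mid n_1\cdots n_{2k}}(1-1/p)$, via de la Bret\`{e}che's Tauberian theorem, with $\log Y\asymp\log X$ giving Theorem \ref{thm:1}. The only cosmetic difference is that you extract the square parts $n_i=d_i\ell_i^2$ by hand (where the weight is not literally a function of $\mathbf d$ alone, though two-sided bounds recover this), leaving only the $\binom{2k}{2}$ factors $\zeta(s_i+s_j)$, whereas the paper applies the theorem directly to the full multiplicative function whose series also carries the factors $\zeta(2s_j)$; both bookkeepings yield the same exact degree $2k^2-k$.
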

In the region $X^{\eps\frac{1}{3k}} \ll Y \ll \left(X/(\log X)^2\right)^{\frac{1}{3k}}$, Theorem \ref{thm:1} implies that the exponent $2k^2-k$ of $\log X$ is best possible. Therefore, it implies that if Conjecture \ref{Jutila conjecture} is true, then
\[
c_2(k) \geq 2k^2-k
\]
holds. The optimality of $c_2(k)$ has not been studied. Our result is the first example of the value of $c_2(k)$ and provides the sharp bounds for $c_2(k)$ in the special case. Recently, Gao and Zhao~\cite{GZ24} derived the shifted moments of quadratic Dirichlet $L$-functions, and as an application, they showed assuming the truth of the GRH that
\[
\sum_{\substack{0<d\leq X \\ d:\text{odd, squarefree}}}\abs{\sum_{n \leq Y} \chi_{8d}(n)}^{2k} \ll XY^k (\log X)^{2k^2-k+1}
\]
for real $k \geq (\sqrt{5}+1)/2$ and for large $X, Y$. By the same argument as in in the proof of Theorem \ref{thm:1}, we can deduce that $\sum_{\substack{0<d\leq X \\ d:\text{odd, squarefree}}} \abs{\sum_{n \leq Y} \chi_{8d}(n)}^{2k}$ has the same order as $S_k(X,Y)$ for $X^{\eps\frac{1}{3k}} \ll Y \ll \left(X/(\log X)^2\right)^{\frac{1}{3k}}$. Hence, from the above results, under the GRH
\[
XY^{k}(\log X)^{2k^2-k} \ll \sum_{\substack{0<d\leq X \\ d:\text{odd, squarefree}}} \abs{\sum_{n \leq Y} \chi_{8d}(n)}^{2k} \ll XY^{k}(\log X)^{2k^2-k+1}
\]
holds for large $X, Y$ with $Y \ll \left(X/(\log X)^2\right)^{\frac{1}{3k}}$.

Since $\log Y \asymp \log X$ when $X^{\eps\frac{1}{3k}} \ll Y \ll \left(X/(\log X)^2\right)^{\frac{1}{3k}}$, Theorem \ref{thm:1} follows immediately from the next theorem. 
\begin{thm}
\label{asymptotic for S_k(X,Y)}
    For large $X,Y$ with $Y \ll \left(X/(\log X)^2\right)^{\frac{1}{3k}}$, we have
    \begin{align*}
    S_k(X,Y) &= XY^{k}Q(\log Y)\\
    &\quad +O(XY^{k-\theta} )+O( X^{\frac{1}{2}+\eps} Y^{k(2+\eps)})+O( X^{\frac{1}{2}}(\log X)Y^{\frac{5k}{2}}(\log Y)^k),
    \end{align*}
    where $Q \in \mathbb{R}[x]$ is a polynomial of exact degree $2k^2-k$ and $\theta>0$.
\end{thm}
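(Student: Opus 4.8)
The plan is to open the $2k$-th power, reduce the character sum to a sum over fundamental discriminants, isolate the diagonal contribution (where the product of the $n_i$ is a perfect square), feed that into de la Bret\`{e}che's multivariable Tauberian theorem, and control the remaining off-diagonal part by Poisson summation. Since $\overline\chi=\chi$ for quadratic $\chi$, one first writes $S_k(X,Y)=\sum_{n_1,\dots,n_{2k}\le Y}\sum_{\chi\in S(X)}\chi(n_1\cdots n_{2k})$, groups the inner sum by conductor, and sieves out the squarefree part of the discriminant; with $m=n_1\cdots n_{2k}$, the innermost sum of the Jacobi symbol $\left(\tfrac{\cdot}{m}\right)$ over discriminants up to $X$ is $\asymp X\prod_{p\mid m}\tfrac{p}{p+1}$ when $m$ is a perfect square, and exhibits cancellation otherwise. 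This gives $S_k(X,Y)=(\text{diagonal})+(\text{off-diagonal})$.

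For the off-diagonal I would apply Poisson summation in the $d$-variable, in the style of Soundararajan and of Gao--Zhao: the zero frequency vanishes because $m$ is not a square, and the nonzero frequencies are estimated by the classical bound for the Gauss sums attached to $\left(\tfrac{\cdot}{m}\right)$, combined with a mean-value (large-sieve type) bound for the ensuing sum over non-square products $m$ of $2k$ integers $\le Y$. This yields the error terms $O(X^{1/2+\eps}Y^{k(2+\eps)})$ and $O(X^{1/2}(\log X)Y^{5k/2}(\log Y)^k)$, the latter also absorbing the error made in counting fundamental discriminants coprime to $m$ in the diagonal. Demanding that these be dominated by the expected main term $XY^k(\log Y)^{2k^2-k}$ is precisely what forces the range $Y\ll(X/(\log X)^2)^{\frac{1}{3k}}$.

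The diagonal reduces, up to terms of lower order in $X$, to $cX\,D(Y)$ with $D(Y):=\sum_{n_i\le Y,\ n_1\cdots n_{2k}=\square}g(n_1\cdots n_{2k})$ and $g(m)=\prod_{p\mid m}\tfrac{p}{p+1}$. The attached multiple Dirichlet series factors as an Euler product,
\[
F(s_1,\dots,s_{2k})=\sum_{n_1\cdots n_{2k}=\square}\frac{g(n_1\cdots n_{2k})}{n_1^{s_1}\cdots n_{2k}^{s_{2k}}}=\prod_{1\le i\le j\le 2k}\zeta(s_i+s_j)\cdot G(s_1,\dots,s_{2k}),
\]
whose local factor at $p$ is $\tfrac{1}{p+1}+\tfrac{p}{p+1}\cdot\tfrac12\bigl(\prod_i(1-p^{-s_i})^{-1}+\prod_i(1+p^{-s_i})^{-1}\bigr)$; all odd-degree terms in the $p^{-s_i}$ cancel, the degree-$2$ part is $\sum_{i\le j}p^{-s_i-s_j}$, and hence $G$ is an absolutely convergent Euler product, holomorphic and of polynomial growth in a tube $\re(s_i)>\tfrac12-\delta$, with $G(\tfrac12,\dots,\tfrac12)=\prod_p G_p$ a convergent product of strictly positive local factors.

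Finally I would apply de la Bret\`{e}che's Tauberian theorem to $D(Y)=\sum_{n_i\le Y}(\cdots)$ with the balanced cut-off. The polar locus of $F$ is the union of the $\binom{2k+1}{2}=k(2k+1)$ hyperplanes $s_i+s_j=1$ ($1\le i\le j\le 2k$); they all meet at $(\tfrac12,\dots,\tfrac12)$, which is the optimal vertex, since there $\sum_i s_i=k$ and the diagonal relations $2s_i=1$ forbid moving any coordinate to the left, while the normals $e_i+e_j$ and $2e_i$ span $\R^{2k}$. The theorem then gives $D(Y)=Y^kP(\log Y)+O(Y^{k-\theta})$ with $\deg P=k(2k+1)-2k=2k^2-k$ and leading coefficient a positive multiple of $G(\tfrac12,\dots,\tfrac12)>0$, so the degree is exact; taking $Q=cP$ completes the proof. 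The main obstacle is the off-diagonal estimate: pushing the error below $XY^k(\log Y)^{2k^2-k}$ requires both genuine cancellation in the sum over $d$ and a sufficiently efficient mean-value bound over the non-square $m$, and this is exactly what pins down the admissible range of $Y$; verifying the hypotheses of de la Bret\`{e}che's theorem — analytic continuation and polynomial growth of $G$, and nondegeneracy of the hyperplane arrangement at $(\tfrac12,\dots,\tfrac12)$ — is lengthier but essentially routine.
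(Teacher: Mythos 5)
Your overall route is the same as the paper's: expand the $2k$-th power, write every quadratic character as $\chi_{dm^2}$ with $d$ a fundamental discriminant, keep the terms with $n_1\cdots n_{2k}=\square$ as the main term, and evaluate that diagonal by applying de la Bret\`eche's Theorems \ref{thm:breteche1} and \ref{thm:breteche2} to the multiple Dirichlet series with polar divisor $\prod_{1\le i\le j\le 2k}\zeta(s_i+s_j)$, which gives exactly the degree count $k(2k+1)-2k=2k^2-k$ together with $H(\bm 0)\neq0$; this part of your argument matches the paper step for step. The one place you diverge is the off-diagonal: the paper does not use Poisson summation at all, but simply quotes Granville--Soundararajan's estimate (Lemma \ref{lem:GranvilleSoundararajan}), whose two cases are precisely what produce the stated error terms --- the bound $z^{1/2}n^{1/4}\log(2n)$ for non-square $n$ gives $X^{1/2}(\log X)Y^{5k/2}(\log Y)^k$, and the divisor-function error in the square case gives $X^{1/2+\eps}Y^{k(2+\eps)}$. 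Your Poisson-summation-plus-large-sieve plan is a plausible (if heavier) substitute, but as written it is only asserted: there is no reason the dual-frequency analysis would reproduce those exact error terms, so you would either have to carry out that estimate in full or, more simply, invoke the Granville--Soundararajan bound as the paper does. A second, minor imprecision: the diagonal weight is not $c\prod_{p\mid n_1\cdots n_{2k}}\tfrac{p}{p+1}$ with a constant $c$ pulled out; summing over the square part $m^2$ of the modulus with $(m,n_1\cdots n_{2k})=1$ contributes a factor $\zeta(2)\prod_{p\mid n_1\cdots n_{2k}}(1-p^{-2})$ depending on the $n_i$, so the correct multiplicative weight is $\prod_{p\mid n_1\cdots n_{2k}}(1-1/p)$ as in \eqref{f r-variable}. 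This changes only Euler factors of the shape $1+O(1/p)$, hence neither the polar structure nor the degree $2k^2-k$, but the constant cannot be factored out as you do.
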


\subsection{Shifted moments of quadratic Dirichlet $L$-functions}
The original work of Jutila on the estimations of quadratic character sums were applied to the second moment of quadratic Dirichlet $L$-functions on the critical line. In fact, by using the estimation $S_1(X,Y)\ll XY(\log X)^8$ and Gallagher's large sieve estimates, Jutila~\cite{Ju3} himself obtained the following upper bound 
\begin{align*}
    \sum_{\chi \in S(X)} \int_{-T}^T \abs{L(1/2+it,\chi)}^2 dt \ll XT (\log (X(T+1)))^{16}
\end{align*}
for $X \geq 3$ and $T>0$. Jutila~\cite{Ju4} also applied his estimation on $S_1(X,Y)$ to obtain the zero density estimates for quadratic Dirichlet $L$-functions. 

In this article, we apply Theorem \ref{thm:1} to the shifted moments of quadratic Dirichlet $L$-functions of the form
\begin{align*}
    M_{\bm{t}}(X):=\sum_{\substack{0<d\leq X \\ d:\text{odd, squarefree}}}\abs{L(1/2+it_1,\chi_{8d}) \cdots L(1/2+it_{2k},\chi_{8d})},
\end{align*}
where $\bm{t}=(t_1,\dots,t_{2k})$. This is a generalization in the sense of $t_j$'s shift of the moments of quadratic Dirichlet $L$-functions at the central point. It is conjectured to be 
    \begin{align*}
        \sum_{\substack{0<d\leq X \\ d:\text{odd, squarefree}}} L(1/2,\chi_{8d})^{2k} \sim C(k) X(\log X)^{2k^2+k}.
    \end{align*}
Jutila~\cite{Ju5} first dealt with this problem and computed the first and second moments, and Soundararajan~\cite{So} obtained asymptotic formulas for the second and third moments. The shifted moments $M_{\bm{t}}(X)$ is a quadratic analogue of shifted moments of the Dirichlet $L$-functions 
\begin{align}
    \label{Shifted moments q-aspect}
    \sum_{\substack{\chi \bmod q \\ \chi\text{:primitive}}} \abs{L(1/2+it_1,\chi)\cdots L(1/2+it_{2k},\chi)},
\end{align}
which was first introduced by Munsch~\cite{Munsch17}. Munsch gave its upper bound under the GRH. 

Recently, Gao and Zhao~\cite{GZ24} showed as a special case that under the GRH and $2k$-tuple $(t_1, \dots, t_{2k})$ with $\abs{t_j} \leq X^A$ for fixed positive real number $A$
\begin{align*}
    M_{\bm{t}}(X) &\ll X(\log X)^\frac{k}{2} \prod_{1 \leq j<l \leq 2k}\abs{\zeta(1+\abs{t_j-t_l}+1/\log X)}^\frac{1}{2}\abs{\zeta(1+\abs{t_j+t_l}+1/\log X)}^\frac{1}{2} \\
    & \quad \times \prod_{1 \leq j \leq 2k}\abs{\zeta(1+\abs{2t_j}+1/\log X)}^\frac{3}{4}.
\end{align*}
The above upper bound can further be estimated by
\begin{align*}
    M_{\bm{t}}(X) &\ll X(\log X)^\frac{k}{2} \prod_{1 \leq j<l \leq 2k}g(\abs{t_j-t_l})^\frac{1}{2}g(\abs{t_j+t_l})^\frac{1}{2} \prod_{1 \leq j \leq 2k}g(\abs{2t_j})^\frac{3}{4},
\end{align*}
where $g : \mathbb{R}_{\geq 0} \to \mathbb{R}$ is defined by
\begin{align*}
    g(x) = \begin{cases}
        \log X & \text{ if } x \leq 1/\log X \text{ or } x \geq e^X, \\
        1/x & \text{ if }  1/\log X \leq x \leq 10, \\
        \log\log x & \text{ if } 10 \leq x \leq e^X.
        \end{cases}
\end{align*}

The original problem of the shifted moments goes back to the case of the Riemann zeta-function. In~\cite{Ch11}, Chandee considered shifted moments of the Riemann zeta-function including 
\begin{align*}
    M_{\bm{\alpha}}(T) &:= \int_T^{2T} \abs{\zeta(1/2+i(t_1+\alpha_1))\cdots \zeta(1/2+i(t_{2k}+\alpha_{2k})} dt,
\end{align*}
and obtained sharp lower and upper bounds of $M_{\bm{\alpha}}(T)$ under the Riemann Hypothesis (RH). Later, Curran~\cite{Cu24, Cu24+} showed as a special case that for all $\abs{\alpha_j} \leq \frac{T}{2}$
\begin{align*}
    M_{\bm{\alpha}}(T) & \asymp T (\log T)^\frac{k}{2} \prod_{1 \leq j < l \leq 2k} \abs{\zeta(1+\abs{t_j-t_l}+1/\log T)}^\frac{1}{2}
\end{align*}
holds under the RH. 

In this article, let $F(X)$ and $G_X(t_1,\dots,t_{2k})$ be positive valued functions. We assume that  
    \begin{align}
    \label{shifted moment}
    M_{\bm{t}}(X) \asymp F(X) G_X(t_1,\dots,t_{2k})
    \end{align}
holds for $\bm{t}=(t_1,\dots,t_{2k})$ with $\abs{t_j} \leq X^\Delta$ for all $1 \leq j \leq 2k$ and a positive $\Delta \leq 1$. We focus on $G_X(t_1,\dots,t_{2k})$ because we only know the upper bounds of the order of $M_{\bm{t}}(X)$. The precise order of $G_X(t_1,\dots,t_{2k})$ is difficult to determine even assuming the GRH and is not easy to conjecture for general shifts. We prove a lower bound for a weighted average of $G_X(t_1,\dots,t_{2k})$. 

\begin{thm}
\label{thm:3}
    We assume that (\ref{shifted moment}) and the GRH holds. With the notation as above, we have
    \begin{align*}
    \int_{-X^\Delta}^{X^\Delta} \dots \int_{-X^\Delta}^{X^\Delta} \frac{G_X(t_1,\dots,t_{2k})}{\prod_{j=1}^{2k}(\abs{t_j}+1)} dt_1\cdots dt_{2k} &\gg \frac{X(\log X)^{2k^2-k}}{F(X)}.
    \end{align*}
\end{thm}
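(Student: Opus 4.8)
The plan is to connect the weighted average of $G_X$ to the sum $S_k(X,Y)$ studied in Theorem \ref{thm:1} by expressing the character sum $\sum_{n \le Y}\chi_{8d}(n)$ as a contour integral of a product of $L$-functions and then applying Cauchy--Schwarz. First I would choose $Y$ in the admissible range of Theorem \ref{thm:1}, say $Y = X^{\delta/(3k)}$ for a small fixed $\delta < \eps$, and recall from (the remark following) Theorem \ref{thm:1} that
\[
\sum_{\substack{0<d\le X \\ d:\text{odd, squarefree}}}\abs{\sum_{n \le Y}\chi_{8d}(n)}^{2k} \asymp_{k,\eps} XY^{k}(\log X)^{2k^2-k}.
\]
Next, using a standard Perron-type / approximate functional equation representation, I would write $\sum_{n\le Y}\chi_{8d}(n)$ (or a smoothed variant, which by partial summation changes nothing up to acceptable error) as
\[
\sum_{n \le Y}\chi_{8d}(n) = \frac{1}{2\pi i}\int_{(c)} L(s+1/2,\chi_{8d})\,\frac{Y^{s}}{s}\,\widehat{w}(s)\,ds
\]
for a suitable weight, shift the contour to the critical line $\re s = 0$ picking up only the pole at $s=0$ contributing $O$ of lower order (here the GRH is used to control the horizontal segments and to ensure no zeros are crossed), so that, up to a negligible error,
\[
\abs{\sum_{n \le Y}\chi_{8d}(n)}^{2k} \ll (\log X)^{C}\int_{[-T,T]^{2k}} \abs{L(1/2+it_1,\chi_{8d})\cdots L(1/2+it_{2k},\chi_{8d})}\;\frac{dt_1\cdots dt_{2k}}{\prod_{j}(\abs{t_j}+1)}
\]
with $T$ a power of $X$; the decay of $\widehat w$ and the convexity bound truncate the tails.

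Then I would sum over $d$ and swap the order of summation and integration, which gives
\[
XY^{k}(\log X)^{2k^2-k} \ll (\log X)^{C}\int_{[-T,T]^{2k}} M_{\bm t}(X)\,\frac{dt_1\cdots dt_{2k}}{\prod_j(\abs{t_j}+1)} \ll (\log X)^{C} F(X)\int_{[-T,T]^{2k}} G_X(\bm t)\,\frac{dt_1\cdots dt_{2k}}{\prod_j(\abs{t_j}+1)},
\]
invoking hypothesis (\ref{shifted moment}). Rearranging yields the desired lower bound $\gg X(\log X)^{2k^2-k}/F(X)$ once one checks that the extra powers of $\log X$ and the factors $Y^{k} = X^{\delta/3}$ and the like can be absorbed — the point being that $Y^k$ is a small power of $X$ but (\ref{shifted moment}) is assumed with the range $\abs{t_j}\le X^\Delta$, so after replacing $[-T,T]$ by $[-X^\Delta,X^\Delta]$ (legitimate since enlarging the domain of a nonnegative integrand only helps) the bound $XY^k(\log X)^{2k^2-k}/((\log X)^C F(X))$ must be compared with $X(\log X)^{2k^2-k}/F(X)$; this forces us to be slightly more careful and extract the main contribution not from a wide box but from $\abs{t_j}\ll 1$, where $G_X$ should be of size $(\log X)^{2k^2-k}$, and where the contour-integral representation of $\sum_{n\le Y}\chi_{8d}(n)$ is genuinely dominated by the central part.

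The main obstacle I anticipate is precisely this quantitative matching: the naive contour argument produces spurious factors of $\log X$ (from the $1/s$ and from bounding $L$ on vertical lines) and a factor $Y^k$, and one must argue that the dominant contribution to $\abs{\sum_{n\le Y}\chi_{8d}(n)}^{2k}$, after averaging over $d$, comes from $\bm t$ in a bounded box and that there the weight $\prod(\abs{t_j}+1)^{-1}$ is $\asymp 1$; equivalently one needs a matching \emph{upper} bound for the contribution of the $\abs{t_j}\gg 1$ range to $S_k(X,Y)$, which should follow from the GRH-based pointwise bound $L(1/2+it,\chi_{8d})\ll (\abs{t}+1)^{\eps}(\log X)^{O(1)}$ together with the rapid decay of $\widehat w$. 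Once this localization is in place the inequality is a clean application of Theorem \ref{thm:1}, Cauchy--Schwarz being in fact unnecessary since we only need a one-sided bound: we drop $\abs{\sum_{n\le Y}\chi_{8d}(n)}^{2k}$ from below by a positive multiple of the squared modulus of the central contour integral, expand, and integrate. I would present the argument by first proving the localized lower bound $S_k(X,Y) \ll (\log X)^{O(1)} F(X)\int_{\abs{t_j}\le X^\Delta} G_X(\bm t)\prod_j(\abs{t_j}+1)^{-1}d\bm t$ and then dividing through.
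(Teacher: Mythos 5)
Your overall route is the same as the paper's: take $Y$ in the admissible range so that Theorem \ref{thm:1} gives $\sum_{d}\abs{\sum_{n\le Y}\chi_{8d}(n)}^{2k}\gg XY^k(\log X)^{2k^2-k}$, pass to a smoothed sum, represent it on the critical line as an integral of $L(1/2+it,\chi_{8d})$ against a weight of size $1/(1+\abs{t})$, sum over $d$, swap sum and integration, and invoke (\ref{shifted moment}). The paper implements exactly this via the two Gao--Zhao lemmas: under GRH the difference between the sharp and smoothed sums contributes $\ll XY^k$ (Lemma \ref{lem:GZ}), and the smoothed $2k$-th moment is $\ll Y^k\sum_d\bigl(\int_{-X^\Delta}^{X^\Delta}\abs{L(1/2+it,\chi_{8d})}\frac{dt}{1+\abs{t}}\bigr)^{2k}+O(XY^k)$ (Lemma \ref{lem:GZ2}).

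However, your quantitative bookkeeping has a genuine error, and the repair you propose would not work. On $\re s=\tfrac12$ the Mellin representation carries $\abs{Y^{s}}=Y^{1/2}$ per factor, so the correct pointwise bound is
\begin{align*}
\abs{\sum_{n}\chi_{8d}(n)\Psi\bigl(\tfrac{n}{Y}\bigr)}^{2k}\ll Y^{k}\left(\int_{-X^{\Delta}}^{X^{\Delta}}\abs{L(1/2+it,\chi_{8d})}\frac{dt}{1+\abs{t}}\right)^{2k}+\text{(negligible)},
\end{align*}
with no $(\log X)^{C}$ loss, because the smooth cutoff's Mellin transform already supplies the $1/(1+\abs{t})$ decay and decays rapidly beyond $\abs{t}\le X^{\Delta}$ (this is where a sharp Perron kernel $Y^s/s$ would cause trouble; also the sharp-versus-smoothed discrepancy is not ``partial summation'' but a genuine GRH estimate, namely Lemma \ref{lem:GZ}). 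Your displayed inequality drops this $Y^{k}$, which is why you then see an apparent mismatch ``$XY^k$ versus $X$'' and are driven to localize to $\abs{t_j}\ll 1$ and assert that there $G_X$ ``should be of size $(\log X)^{2k^2-k}$''. That step is circular: nothing about the pointwise size of $G_X$ is assumed beyond (\ref{shifted moment}), and the theorem being proved is precisely a statement about its weighted average, so you cannot feed in its conjectured size near $\bm{t}=\bm{0}$. With the $Y^k$ kept, no localization is needed: summing over $d$ gives $XY^k(\log X)^{2k^2-k}\ll Y^k\int_{[-X^\Delta,X^\Delta]^{2k}} M_{\bm t}(X)\prod_j(\abs{t_j}+1)^{-1}\,d\bm t+O(XY^k)$, the $Y^k$ cancels, the error term $XY^k$ is of lower order, and (\ref{shifted moment}) finishes the proof exactly as in the paper.
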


We combine our Theorem \ref{thm:3} with Lemma 4.2 in~\cite{GZ24} to deduce that under the GRH
    \begin{align*}
    \frac{X(\log X)^{2k^2-k}}{F(X)} \ll \int_{-X^\Delta}^{X^\Delta} \dots \int_{-X^\Delta}^{X^\Delta} \frac{G_X(t_1,\dots,t_{2k})}{\prod_{j=1}^{2k}(\abs{t_j}+1)} dt_1\cdots dt_{2k} \ll \frac{X(\log X)^{2k^2-k+1}}{F(X)}
    \end{align*}
holds for $\bm{t} \in [-X^\eps, X^\eps]^{2k}$. This implies that the exact order of the weighted average of $G_X(t_1,\dots,t_{2k})$ with the exception of a single $\log X$ can be obtained. 

\section{Quadratic character sums}
Every quadratic nonprincipal character $\chi_D(n)$ modulo $\abs{D}$ can be represented by the Kronecker symbol $\left(\frac{D}{n}\right)$ for $n>0$, where $D$ is not a square and $D \equiv 0,1 \pmod 4$. Every such $D$ can be uniquely expressed as $dm^2$, where $d$ is a fundamental discriminant. Then $d$ is either 
\begin{itemize}
    \item $d$ is squarefree, $d \equiv 1 \pmod 4$,
    \item $d=4N$, $N$ is squarefree, $N \equiv 2 \pmod 4$ or
    \item $d=4N$, $N$ is squarefree, $N \equiv 3 \pmod 4$.
\end{itemize}
From the definition of (\ref{S_k}) we have
\begin{align*}
     S_k(X,Y) &= \sideset{}{^*}{\sum}_{\abs{D} \leq X} \abs{\sum_{n \leq Y}\chi_D(n)}^{2k} \\
     &= \sum_{m \leq X^{\frac{1}{2}}} \sideset{}{^\flat}{\sum}_{\abs{d} \leq \frac{X}{m^2}} \abs{\sum_{\substack{n \leq Y \\ (n,m)=1}}\left(\frac{d}{n}\right)}^{2k} \\
     &= \sum_{m \leq X^{\frac{1}{2}}} \sideset{}{^\flat}{\sum}_{\abs{d} \leq \frac{X}{m^2}} \sum_{\substack{n_1 \leq Y \\ (n_1,m)=1}} \dots \sum_{\substack{n_{2k} \leq Y \\ (n_{2k},m)=1}}\left(\frac{d}{n_1}\right) \cdots \left(\frac{d}{n_{2k}}\right) \\
     &= \sum_{n_1 \leq Y} \dots \sum_{n_{2k} \leq Y } \sum_{\substack{m \leq X^{\frac{1}{2}} \\ (m,n_1\cdots n_{2k})=1}} \sideset{}{^\flat}{\sum}_{\abs{d} \leq \frac{X}{m^2}}\left(\frac{d}{n_1\cdots n_{2k}}\right),
\end{align*}
where $\sum^*$ and $\sum^\flat$ indicate that the sum is over not a square and $\equiv 0,1 \pmod 4$ and fundamental discriminants, respectively. Then we apply the following estimates of quadratic characters given by Granville and Soundararajan~\cite{GS}.

\begin{lem}{\cite[Lemma 4.1 and page 1014]{GS}}
\label{lem:GranvilleSoundararajan}
    Let $n$ be a positive integer, not a perfect square, then
    \[
    \sideset{}{^\flat}{\sum}_{\abs{d} \leq z} \left(\frac{d}{n}\right) \ll z^{\frac{1}{2}}n^{\frac{1}{4}}\log (2n),
    \]
    while if $n$ is a perfect square then
    \[
     \sideset{}{^\flat}{\sum}_{\abs{d} \leq z} \left(\frac{d}{n}\right) =\frac{z}{\zeta(2)} \prod_{p\mid n} \frac{p}{p+1}+O(z^{\frac{1}{2}+\eps} d(\sqrt{n})),
    \]
    where $d(n)=\sum_{d \mid n}1$.
\end{lem}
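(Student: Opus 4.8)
The plan is to detect, by an elementary square‑sieve, the condition that $d$ be a fundamental discriminant, and thereby reduce both assertions to short character sums $\sum_{|e|\le Z}\left(\frac{e}{n}\right)$ taken over $e$ in a fixed residue class modulo $4$, whose size is then governed entirely by whether $\left(\frac{\cdot}{n}\right)$, regarded as a character in the numerator variable, is principal — and this happens exactly when $n$ is a perfect square. Concretely, a fundamental discriminant is either an odd squarefree $d\equiv1\pmod4$ or $d=4m$ with $m$ squarefree and $m\equiv2,3\pmod4$, so I would treat these two cases in parallel, detect squarefreeness by $\mu^{2}(r)=\sum_{\ell^{2}\mid r}\mu(\ell)$, substitute $d=\ell^{2}e$ (respectively $m=\ell^{2}m'$), and use the multiplicativity relation $\left(\frac{\ell^{2}e}{n}\right)=\mathbf 1_{(\ell,n)=1}\left(\frac{e}{n}\right)$ to absorb the $\ell$‑part, being left for each sieving parameter $\ell\le z^{1/2}$ with a sum of $\left(\frac{\cdot}{n}\right)$ over an interval of length $\ll z/\ell^{2}$ in a fixed class modulo $4$.

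For $n$ not a perfect square, $\left(\frac{\cdot}{n}\right)$ is a non‑principal real character whose modulus divides $8n$ (its conductor being the radical of $n$ up to a bounded $2$‑power coming from the $2$‑adic conventions for the Kronecker symbol); hence each inner sum is $\ll\min\bigl(z/\ell^{2},\,n^{1/2}\log(2n)\bigr)$, the first estimate trivial and the second the P\'olya--Vinogradov inequality. It then remains to sum over $\ell\le z^{1/2}$, and here I would split the range at $\ell_{0}:=z^{1/2}n^{-1/4}$: the terms $\ell\le\ell_{0}$ contribute $\ll\ell_{0}\,n^{1/2}\log(2n)=z^{1/2}n^{1/4}\log(2n)$, while the terms $\ell>\ell_{0}$ contribute $\ll z\sum_{\ell>\ell_{0}}\ell^{-2}\ll z/\ell_{0}=z^{1/2}n^{1/4}$. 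This would give the first bound; the single possible term $d=1$, with $\left(\frac{1}{n}\right)=1$, is harmless.

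For $n$ a perfect square one has $\left(\frac{d}{n}\right)=\mathbf 1_{(d,n)=1}$, so the left‑hand side counts the fundamental discriminants $|d|\le z$ that are coprime to $n$. Running the same square‑sieve together with an inclusion--exclusion over the primes $p\mid n$, the main term would be $z$ times the product of the local densities, which is $\dfrac{z}{\zeta(2)}\prod_{p\mid n}\dfrac{p}{p+1}$, while the error, coming from truncating the square‑sieve at $\ell\le z^{1/2}$ and from the $2^{\omega(n)}$ residue classes imposed by the coprimality condition, would be $O\bigl(z^{1/2+\eps}2^{\omega(n)}\bigr)=O\bigl(z^{1/2+\eps}d(\sqrt n)\bigr)$, using $2^{\omega(n)}=2^{\omega(\sqrt n)}\le d(\sqrt n)$.

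The main obstacle is not analytic: most of the argument is sieve bookkeeping together with the (slightly delicate at $p=2$) computation of the local densities $\prod_{p\mid n}p/(p+1)$ and the harmless treatment of $d=1$. The one step that genuinely produces the stated exponents is the balancing of the trivial bound against P\'olya--Vinogradov at $\ell_{0}=z^{1/2}n^{-1/4}$; without it one would only obtain $z^{1/2}n^{1/2}\log(2n)$ in the first case, and it is precisely this optimisation that pins down the exponent $1/4$.
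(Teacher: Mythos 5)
This lemma is not proved in the paper at all --- it is quoted verbatim from Granville--Soundararajan \cite[Lemma 4.1 and p.~1014]{GS} --- and your sketch correctly reconstructs essentially the argument of that source: square-sieve detection of fundamental discriminants, P\'olya--Vinogradov on the inner sums with the split at $\ell_0=z^{1/2}n^{-1/4}$ in the non-square case, and in the square case a density count of fundamental discriminants coprime to $n$, with the right local factors $p/(p+1)$ (including $p=2$, which comes from the $d\equiv 1\pmod 4$ family alone) and an error $O\bigl(z^{1/2+\eps}2^{\omega(n)}\bigr)\le O\bigl(z^{1/2+\eps}d(\sqrt{n})\bigr)$. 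The only point you should make explicit is that the characters obtained after twisting $\left(\frac{\cdot}{n}\right)$ by the modulus-$8$ characters used to pick out the residue class of $e$ remain non-principal for every non-square $n$: the primitive character underlying $\left(\frac{\cdot}{n}\right)$ has conductor either divisible by an odd prime or equal to $8$, and in the latter case $n$ is even so the $d\equiv 0\pmod 4$ family contributes nothing, so the P\'olya--Vinogradov step goes through exactly as you claim.
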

By using the above estimates, we obtain
\begin{align*}
    S_k(X,Y) &= \sum_{n_1 \leq Y} \dots \sum_{n_{2k} \leq Y } \sum_{\substack{m \leq X^{\frac{1}{2}} \\ (m,n_1\cdots n_{2k})=1}} \left\{\frac{X}{\zeta(2)m^2 }\prod_{\substack{p \mid n_1\cdots n_{2k} \\ n_1 \cdots n_{2k}=\square}} \frac{p}{p+1} \right. \\
    & \qquad\qquad +O \left( \left(\frac{X}{m^2}\right)^{\frac{1}{2}+\eps}(n_1\cdots n_{2k})^\frac{\eps}{2}\right) \\
    & \qquad\qquad \left.+O \left( \left( \frac{X}{m^2}\right)^\frac{1}{2}(n_1\cdots n_{2k})^\frac{1}{4}(\log (n_1\cdots n_{2k}))^{\frac{1}{2}}\right)\right\} \\
    &=\sum_{n_1 \leq Y} \dots \sum_{n_{2k} \leq Y}  \left\{\frac{X}{\zeta(2)}\prod_{\substack{p \mid n_1\cdots n_{2k} \\ n_1 \cdots n_{2k}=\square}} \frac{p}{p+1} \left( \zeta(2)\prod_{p \mid n_1 \cdots n_{2k}}\left( 1-\frac{1}{p^2}\right)+O(X^{-\frac{1}{2}})\right)\right. \\
    & \qquad\qquad +O \left( X^{\frac{1}{2}+\eps}(n_1\cdots n_{2k})^\frac{\eps}{2}\right) \\
    & \qquad\qquad \left.+O \left( X^\frac{1}{2}(\log X)(n_1\cdots n_{2k})^\frac{1}{4}(\log (n_1\cdots n_{2k}))^{\frac{1}{2}}\right)\right\} \\
    &=X\sum_{n_1 \leq Y} \dots \sum_{n_{2k} \leq Y} \prod_{\substack{p \mid n_1\cdots n_{2k} \\ n_1 \cdots n_{2k}=\square}} \left(1-\frac{1}{p}\right) +O(X^{\frac{1}{2}}Y^{2k}) \\
    & \qquad\qquad +O \left( X^{\frac{1}{2}+\eps}Y^{k(2+\eps)}\right) +O \left( X^\frac{1}{2}(\log X)Y^\frac{5k}{2}(\log Y)^k\right).
\end{align*}

\section{Arithmetical functions in several variables}
For a positive integer $r$, let $f : \mathbb{N}^r \to \mathbb{C}$ be an arithmetic function of $r$ variables. Then $f$ which is not identically zero is said to be multiplicative if $f(1,\dots,1)=1$ and
\[
f(m_1n_1,\dots,m_kn_r) = f(m_1,\dots,m_r)f(n_1,\dots,n_r)
\]
holds for any $m_1,\dots,m_r, n_1,\dots,n_r \in \mathbb{N}$ such that $\gcd(m_1 \cdots m_r, n_1 \cdots n_r) = 1$. This is a generalization of the classical one-variable multiplicative functions which satisfies $f(1)=1$ and $f(mn) = f(m)f(n)$ for $\gcd(m,n)=1$. 

If $f$ is multiplicative, then it is determined by the values $f(p^{v_1}, \dots, p^{v_r})$, where $p$ is prime and $v_1, \dots,v_r \in \mathbb{N}_0$. Thus, a formal Dirichlet series in several variables of $f$ admits an Euler product expansion:
\begin{align}
\label{Euler prod}
\sum_{n_1, \dots n_r =1}^\infty \frac{f(n_1,\dots, n_r)}{n_1^{s_1}\cdots n_r^{s_r}} =\prod_{p} \left( \sum_{v_1,\dots,v_r = 0}^\infty \frac{f(p^{v_1}, \dots, p^{v_r})}{p^{v_1s_1+\dots+v_ks_r}}\right).
\end{align}

In the literature on multiplicative functions in several variables, the corresponding multiple Dirichlet series first appears in the paper of Vaidyanathaswamy~\cite{V31}. The theory of multiple Dirichlet series was further developed by some authors without mentioning~\cite{V31}. In 1977, Selberg formulated multiple Dirichlet series attached to multiplicative functions in several variables (see~\cite[Chapter I, Definition 4.17]{T}). For the details of the theory of multiple Dirichlet series, we refer~\cite{To}.

In this paper, let
\begin{align}
\label{f r-variable}
    f(n_1,\dots,n_{2k}) := \prod_{p \mid n_1\cdots n_{2k}} \left(1-\frac{1}{p} \right)\times \mathds{1}_{n_1\cdots n_{2k} = \square}.
\end{align}
Here, $n=\square$ means that $n$ is a square integer, and $\mathds{1}_{n = \square}$ takes $1$ if $n=\square$, and $0$ otherwise. It is easy to show that $f(n_1,\dots,n_{2k})$ is multiplicative. From the previous section we find that
\begin{align*}
    S_k(X,Y) &= X\sum_{n_1 \leq Y} \dots \sum_{n_{2k} \leq Y} f(n_1,\dots,n_{2k}) +O(X^{\frac{1}{2}}Y^{2k}) \\
    & \qquad\qquad +O \left( X^{\frac{1}{2}+\eps}Y^{k(2+\eps)}\right) +O \left( X^\frac{1}{2}(\log X)Y^\frac{5k}{2}(\log Y)^k\right).
\end{align*}

\begin{lem}
\label{lem:convolution}
    We have   
    \begin{align}
        \sum_{n_1, \dots, n_{2k} =1}^\infty \frac{f(n_1,\dots, n_{2k})}{n_1^{s_1}\cdots n_k^{s_{2k}}} &= \left(\prod_{1 \leq j \leq 2k} \zeta(2s_j) \prod_{1 \leq l_1<l_2 \leq 2k} \zeta(s_{l_1}+s_{l_2})\right) E(s_1,\dots,s_{2k}), 
    \end{align}
    where $E(s_1,\dots,s_{2k})$ is a Dirichlet series absolutely convergent for $\re(s_j) > 1/4$ for $1\leq j \leq 2k$.
\end{lem}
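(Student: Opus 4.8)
The plan is to pass to the Euler product \eqref{Euler prod} and compare local factors. Writing $x_j:=p^{-s_j}$ and using the multiplicativity of $f$, we have
\[
\sum_{n_1,\dots,n_{2k}\ge 1}\frac{f(n_1,\dots,n_{2k})}{n_1^{s_1}\cdots n_{2k}^{s_{2k}}}=\prod_p f_p,\qquad f_p:=\sum_{v_1,\dots,v_{2k}\ge 0}\frac{f(p^{v_1},\dots,p^{v_{2k}})}{p^{v_1s_1+\cdots+v_{2k}s_{2k}}}.
\]
By \eqref{f r-variable}, $f(p^{v_1},\dots,p^{v_{2k}})$ equals $1$ when all $v_j=0$, equals $1-1/p$ when $v_1+\cdots+v_{2k}$ is a positive even integer, and equals $0$ otherwise; combining this with the identity $\sum_{2\mid v_1+\cdots+v_{2k}}x_1^{v_1}\cdots x_{2k}^{v_{2k}}=\tfrac12\bigl(\prod_j(1-x_j)^{-1}+\prod_j(1+x_j)^{-1}\bigr)$ gives
\[
f_p=1+\Bigl(1-\tfrac1p\Bigr)\Bigl(\tfrac12\prod_{j=1}^{2k}\frac1{1-x_j}+\tfrac12\prod_{j=1}^{2k}\frac1{1+x_j}-1\Bigr).
\]
Let $Z_p:=\prod_{j=1}^{2k}(1-x_j^2)^{-1}\prod_{1\le l_1<l_2\le 2k}(1-x_{l_1}x_{l_2})^{-1}$ be the $p$-th Euler factor of $\prod_j\zeta(2s_j)\prod_{l_1<l_2}\zeta(s_{l_1}+s_{l_2})$, and let $E$ be the Dirichlet series with multiplicative coefficients $e(\cdot)$ whose $p$-th Euler factor is $E_p:=f_p/Z_p=f_p\prod_j(1-x_j^2)\prod_{l_1<l_2}(1-x_{l_1}x_{l_2})$. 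Then the factorization in the statement holds as an identity of formal Euler products, and it remains to show that $E$ converges absolutely for $\re(s_j)>1/4$.

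To this end I would expand $f_p$ and $Z_p$ as power series in $x_1,\dots,x_{2k}$ and observe that they match through low degrees: both are supported on monomials of even total degree, both have constant term $1$, both have vanishing part of total degree $3$, and their parts of total degree $2$ both equal $\sum_{j}x_j^2+\sum_{l_1<l_2}x_{l_1}x_{l_2}$ up to the scalar $1-1/p$ carried by $f_p$. Hence
\[
E_p=1-\frac1p\Bigl(\sum_{j=1}^{2k}x_j^2+\sum_{1\le l_1<l_2\le 2k}x_{l_1}x_{l_2}\Bigr)+(\text{terms of total degree}\ge 4).
\]
Moreover $Z_p^{-1}=\prod_j(1-x_j^2)\prod_{l_1<l_2}(1-x_{l_1}x_{l_2})$ is a polynomial with at most $C_k:=2^{2k+\binom{2k}{2}}$ monomials, each with coefficient $\pm1$, and the coefficients of $f_p$ lie in $[0,1]$, so every coefficient of $E_p=f_pZ_p^{-1}$ is bounded in absolute value by $C_k$.

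Now fix $\delta>0$ and restrict to $\re(s_j)\ge\tfrac14+\delta$, so that $|x_j|\le p^{-1/4-\delta}$. In the local sum $\sum_{v_1,\dots,v_{2k}\ge0}|e(p^{v_1},\dots,p^{v_{2k}})|\,p^{-\sum_j v_j\re(s_j)}$ the total-degree-$2$ part contributes $\ll_k p^{-1}\cdot p^{-1/2-2\delta}$, the total-degree-$3$ part contributes nothing, and the total-degree-$\ge4$ part contributes $\ll_k\sum_{m\ge4}\binom{m+2k-1}{2k-1}p^{-m(1/4+\delta)}\ll_{k,\delta}p^{-1-4\delta}$, a convergent geometric tail in which the strict inequality $4(\tfrac14+\delta)>1$ is used. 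Hence this local sum equals $1+O_{k,\delta}(p^{-1-\eta})$ with $\eta=\eta(\delta)>0$ at all large primes, and is finite at the remaining finitely many primes, so taking the product over $p$ yields $\sum_{m_1,\dots,m_{2k}}|e(m_1,\dots,m_{2k})|\prod_j m_j^{-\re(s_j)}<\infty$; thus $E$ converges absolutely throughout $\re(s_j)>1/4$, which completes the proof.

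The one point that needs care is the low-degree matching: the discrepancy $f_p-Z_p$ vanishes in total degrees $0$, $1$, $3$ and is only $O_k(1/p)$ in total degree $2$, and this, together with $\re(s_j)>1/4$ (which makes $4\re(s_j)>1$ dispose of the total-degree-$\ge4$ terms), is exactly what forces $E$ to be absolutely convergent on precisely the half-space claimed; a coarser choice of zeta factors would leave a residual product over $p$ whose logarithm behaves like $\sum_p p^{-3\re(s_j)}$ or $\sum_p p^{-2\re(s_j)}$ and diverges on part of this region. The remaining computations --- the two total-degree-$2$ parts, the total-degree-$3$ vanishing, and the total-degree-$\ge4$ tail estimates for $f_p$ and $Z_p^{-1}$ --- are entirely routine.
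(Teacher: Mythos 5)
Your proposal is correct and follows essentially the same route as the paper: pass to the Euler product, factor out the local factors of $\prod_j\zeta(2s_j)\prod_{l_1<l_2}\zeta(s_{l_1}+s_{l_2})$, observe that the degree-$2$ terms cancel up to a factor $1/p$ so that each local factor of $E$ is $1+O_k(p^{-1-\eta})$ when $\re(s_j)>1/4$, and conclude absolute convergence. The only differences are bookkeeping ones --- you bound the degree-$\ge 4$ tail via a coefficient count and the even-part identity, where the paper uses an Eulerian-polynomial estimate --- and your degree-$2$ coefficient $-\frac1p\bigl(\sum_j x_j^2+\sum_{l_1<l_2}x_{l_1}x_{l_2}\bigr)$ is in fact the correct one (the paper's displayed factor $2$ is a harmless slip).
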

\begin{proof}
Form (\ref{Euler prod}) and (\ref{f r-variable}), we have 
\begin{align*}
    \sum_{n_1, \dots n_{2k} =1}^\infty \frac{f(n_1,\dots, n_{2k})}{n_1^{s_1}\cdots n_{2k}^{s_{2k}}} 
    &= \prod_{p} \left( 1+\sum_{\substack{v_1,\dots,v_{2k} =0 \\ (v_1,\dots,v_{2k})\neq (0,\dots,0) \\ v_1+\dots+v_{2k}:\text{even}}}^\infty \left(1-\frac{1}{p}\right) \frac{1}{p^{v_1s_1+\dots+v_{2k}s_{2k}}} \right) \\
    &= \prod_{p} \left( 1+\sum_{\substack{v_1,\dots,v_{2k} =0 \\ v_1+\dots+v_{2k}=2}} \left(1-\frac{1}{p}\right) \frac{1}{p^{v_1s_1+\dots+v_{2k}s_{2k}}} \right. \\
    &\qquad\qquad \left. +\sum_{\substack{v_1,\dots,v_{2k} =0 \\ v_1+\dots+v_{2k}:\text{even} \\ v_1+\dots+v_{2k} \geq 4  }}^\infty \left(1-\frac{1}{p}\right) \frac{1}{p^{v_1s_1+\dots+v_{2k}s_{2k}}} \right).
\end{align*}
By putting $\si = \min \{ \si_j \mid 1 \leq j \leq 2k\}$ we have
\begin{align*}
    \sum_{\substack{v_1,\dots,v_{2k} =0 \\ v_1+\dots+v_{2k}:\text{even} \\ v_1+\dots+v_{2k} \geq 4  }}^\infty \left(1-\frac{1}{p}\right) \frac{1}{p^{v_1s_1+\dots+v_{2k}s_{2k}}} &\ll \sum_{q=2}^\infty \frac{1}{p^{2q\si}} \sum_{\substack{v_1,\dots,v_{2k}\geq 0 \\ v_1+\dots+v_{2k}=2q}}1 \\
    &\ll_k \sum_{q=2}^\infty \frac{(2q+1)^{2k}}{p^{2q\si}}.
\end{align*}
We now invoke the generating function for the sum of $l$-th powers
\[
\sum_{l=1}^\infty l^n x^l = \frac{x}{(1-x)^{n+1}}A_n(x),
\]
where $A_n(x)$ are the Eulerian polynomials which are defined by the exponential generating function
\[
\sum_{n=0}^\infty A_n(t) \frac{x^n}{n!} = \frac{t-1}{t-e^{(t-1)x}}.
\]
By using the facts that $A_n(x)$ can be written in terms of the Eulerian numbers $A(n,k)$ as
\begin{align*}
    A_n(x) = \sum_{k=0}^n A(n,k) x^k
\end{align*}
and $A(n,0)=1$ for all $n \geq 1$, we get
\begin{align*}
    \sum_{\substack{v_1,\dots,v_{2k} =0 \\ v_1+\dots+v_{2k}:\text{even} \\ v_1+\dots+v_{2k} \geq 4  }}^\infty \left(1-\frac{1}{p}\right) \frac{1}{p^{v_1s_1+\dots+v_{2k}s_{2k}}} 
    \ll_k \sum_{q=2}^\infty \frac{q^{2k}}{p^{2q\si}} \ll_k \frac{1}{p^{4\si}}.
\end{align*}

Next we factor out the products of the Riemann zeta-functions to obtain
\begin{align}
\label{multiple sum in Euler product}
    \sum_{n_1, \dots n_{2k} =1}^\infty \frac{f(n_1,\dots, n_{2k})}{n_1^{s_1}\cdots n_{2k}^{s_{2k}}} 
    &= \left(\prod_{1 \leq j \leq 2k} \zeta(2s_j) \prod_{1 \leq l_1<l_2 \leq 2k} \zeta(s_{l_1}+s_{l_2})\right) E(s_1,\dots,s_{2k}), 
\end{align}
say. Then by using the estimate (\ref{multiple sum in Euler product}), we can calculate $E(s_1,\dots,s_{2k})$ as
\begin{align*}
    &E(s_1,\dots,s_k) \\
    &= \prod_{p} \left( 1+\sum_{\substack{v_1,\dots,v_{2k} \geq 0 \\ v_1+\dots+v_{2k}=2}} \left(1-\frac{1}{p}\right) \frac{1}{p^{v_1s_1+\dots+v_{2k}s_{2k}}}  +\sum_{\substack{v_1,\dots,v_{2k} =0 \\ v_1+\dots+v_{2k}:\text{even} \\ v_1+\dots+v_{2k} \geq 4  }}^\infty \left(1-\frac{1}{p}\right) \frac{1}{p^{v_1s_1+\dots+v_{2k}s_{2k}}} \right) \\
    &\qquad \times \prod_{1 \leq j \leq 2k} \left( 1-\frac{1}{p^{2s_j}}\right) \prod_{1 \leq l_1<l_2 \leq 2k} \left( 1-\frac{1}{p^{s_{l_1}+s_{l_2}}}\right) \\
    &=\prod_{p} \left( 1+\sum_{\substack{1 \leq i \leq 2k \\ \\ \\}} \left(1-\frac{1}{p}\right) \frac{1}{p^{2s_i}} +\sum_{1 \leq h_1<h_2 \leq 2k} \left(1-\frac{1}{p}\right) \frac{1}{p^{s_{h_1}+s_{h_2}}} +O_k\left(\frac{1}{p^{4\si}}\right) \right) \\
    &\qquad \times \left(1 - \sum_{1 \leq j \leq 2k} \frac{1}{p^{2s_j}}+ O_k\left(\frac{1}{p^{4\si}}\right)\right) \left(1-\sum_{1 \leq l_1<l_2 \leq 2k} \frac{1}{p^{s_{l_1}+s_{l_2}}}+O_k\left( \frac{1}{p^{4\si}}\right)\right) \\
    &=\prod_{p} \left( 1-2\sum_{\substack{1 \leq i \leq 2k \\ \\ \\}} \frac{1}{p^{2s_i+1}} -2\sum_{1 \leq h_1<h_2 \leq 2k} \frac{1}{p^{s_{h_1}+s_{h_2}+1}}+O_k \left(\frac{1}{p^{4\si}}\right) \right).    
\end{align*}
Therefore, the above Euler product is convergent absolutely for $\re(s_j)>1/4$. So we obtain the desired result.
\end{proof}

\section{Proof of Theorem \ref{asymptotic for S_k(X,Y)}}
Our main tool to prove Theorem \ref{asymptotic for S_k(X,Y)} is the de la Bret\`{e}che Tauberian theorem~\cite{B} for non-negative arithmetical functions in several variables. We denote the canonical basis of $\mathbb{C}^r$ by $\{\bm{e}_j\}_{j=1}^r$ and its dual basis by $\{\bm{e}_j^*\}_{j=1}^r$.

\begin{thm}[{\cite[Th\'{e}or\`{e}me 1]{B}}]
\label{thm:breteche1}
    Let $f: \mathbb{N}^r \to \mathbb{R}$ be a non-negative function and $F$ the associated Dirichlet series of $
f$ defined by
\[
F(\bm{s}) = F(s_1,\dots,s_r) = \sum_{n_1,\dots,n_r=1}^\infty \frac{f(n_1,\dots,n_r)}{n^{s_1}\cdots n_r^{s_r}}.
\]
Denote by $\mathcal{LR}_r^+(\mathbb{C})$ the set of non-negative $\mathbb{C}$-linear forms from $\mathbb{C}^r$ to $\mathbb{C}$ on $(\mathbb{R}_{\geq 0})^r$. Moreover, assume that there exists $(c_1, \dots,c_r) \in (\mathbb{R}_{\geq 0})^r$ such that:
\begin{enumerate}[(1)]
\item For $\bm{s} \in \mathbb{C}^r$, $F(s_1,\dots,s_r)$ is absolutely convergent for $\re(s_i) > c_i$ for all $1 \leq i \leq r$.

\item There exist a finite family $\mathcal{L} = (l^{(i)})_{1\leq i \leq q}$ of non-zero elements of $\mathcal{LR}_r^+(\mathbb{C})$, a finite family $(h^{
(i)})_{1\leq i \leq q^\prime}$ of elements of $\mathcal{LR}_r^+(\mathbb{C})$ and $\delta_1, \delta_3>0$ such that the function $H$ defined by
\[
H(\bm{s}) = F(\bm{s} + \bm{c}) \prod_{i=1}^q l^{(i)}(\bm{s})
\]
has a holomorphic continuation to the domain
\begin{align*}
D(\delta_1, \delta_3) 
&= \left\{ \bm{s} \in \mathbb{C}^r \mid \re\left(l^{(i)}(\bm{s})\right) > -\delta_1 \text{ for all } 1\leq i \leq q \right. \\
& \qquad\qquad\qquad \left. \text{ and } \re\left(h^{(i)}(\bm{s})\right)> -\delta_3 \text{ for all } 1\leq i \leq q^\prime \right\}.
\end{align*}
\item There exists $\delta_2>0$ such that for any $\eps,\eps^\prime>0$ we have uniformly in $\bm{s} \in D(\delta_1 -\eps,\delta_3-\eps^\prime)$ 
\[
H(\bm{s}) \ll \prod_{i=1}^q \left(\abs{\im\left(l^{(i)}(\bm{s})\right)}+1 \right)^{1-\delta_2 \min \left\{0, \re\left( l^{(i)}(\bm{s})\right)\right\}}(1+(\abs{\im(s_1)}+\dots+\abs{\im(s_k)})^\eps).
\]
\end{enumerate}
Set $J = J(\mathbb{C}) = \{j \in \{1, \dots, r\} \mid c_j = 0\}$. Denote $w$ to be the cardinality of $J$ and by $j_1 <\dots<j_w$ its elements in increasing order. Define the $w$ linear forms $l^{(q+i)}$ $(1 \leq i \leq w)$ by $l^{(q+i)}(\bm{s}) = {e^{*}}_{j_i}(\bm{s}) = s_{j_i}$.

Then, for any $\bm{\beta} = (\beta_1,\dots, \beta_r) \in (0,\infty)^r$, there exist a polynomial $Q_{\bm{\beta}} \in \mathbb{R}[X]$ of degree at most $q + w - Rank \left(l^{(1)}, \dots, l^{(q+w)}\right)$ and $\theta > 0$ such that as $x \to \infty$
\[
\sum_{n_1\leq x^{\beta_1}}\dots\sum_{n_r \leq x^{\beta_r}} f(n_1,\dots,n_r) = x^{\langle \bm{c}, \bm{\beta} \rangle} Q_{\bm{\beta}}(\log x) +O\left( x^{\langle \bm{c}, \bm{\beta} \rangle - \theta}\right).
\]
Here, $\langle \cdot, \cdot \rangle$ denotes the usual inner product in $\mathbb{R}^r$.
\end{thm}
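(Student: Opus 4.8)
The plan is to prove this by multidimensional Mellin inversion followed by contour shifting and a residue computation, which is the method of de la Bret\`{e}che. I would begin by translating the problem by $\bm{c}$: after the change of variables $s_j\mapsto s_j+c_j$ the series $G(\bm{s}):=F(\bm{s}+\bm{c})$ is absolutely convergent for $\re(s_j)>0$, its singularities near the origin all lie on the hyperplanes $l^{(i)}(\bm{s})=0$ ($1\le i\le q$), and by hypothesis (2) the function $H(\bm{s})=G(\bm{s})\prod_{i=1}^q l^{(i)}(\bm{s})$ continues holomorphically to $D(\delta_1,\delta_3)$ with the growth control (3). Since the naive $r$-fold truncated Perron integral for the sharp cutoffs $\mathbf{1}_{n_j\le x^{\beta_j}}$ does not converge absolutely, I would first smooth each cutoff, replacing it by a weight $w(n_j x^{-\beta_j})$ with $w$ smooth, $w\equiv 1$ on $[0,1-\eps]$ and $w\equiv 0$ on $[1,\infty)$; Mellin inversion in each variable then gives
\[
\sum_{\bm{n}} f(\bm{n})\prod_j w(n_j x^{-\beta_j}) \;=\; \frac{1}{(2\pi i)^r}\int_{(\kappa_1)}\!\cdots\!\int_{(\kappa_r)} F(\bm{s})\prod_{j=1}^r \widetilde{w}(s_j)\, x^{\beta_j s_j}\, ds_1\cdots ds_r,
\]
where $\kappa_j>c_j$, and each $\widetilde{w}(s_j)$ is holomorphic apart from a simple pole at $s_j=0$ and decays faster than any power of $\abs{\im(s_j)}$ in vertical strips.

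Next I would shift the contours to the left, moving $\re(s_j)$ from $\kappa_j$ down to $c_j-\eta$ for a small $\eta>0$, chosen so that once all the hyperplanes $\re(l^{(i)}(\bm{s}-\bm{c}))=0$ and the planes $s_j=0$ ($j\in J$) have been crossed, the resulting product contour lies in the region where $H$ is holomorphic, i.e.\ in $D(\delta_1-\eps,\delta_3-\eps')$ after translating back. Because $H$ is holomorphic there, the only residues picked up come from the zeros of $\prod_i l^{(i)}(\bm{s}-\bm{c})$ and, for $j\in J$, from the simple poles of $\widetilde{w}(s_j)$ at $s_j=0$ — which is exactly why the $w$ extra forms $l^{(q+i)}(\bm{s})=s_{j_i}$ are adjoined to the family. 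Collecting these as iterated one-dimensional residues (repeated use of Cauchy's formula over successively lower-dimensional contours) produces the main term $x^{\langle\bm{c},\bm{\beta}\rangle}Q_{\bm{\beta}}(\log x)$: each factor $x^{\beta_j s_j}$ contributes a power of $\log x$ when differentiated at a multiple pole. The crucial point is that the total $\log$-degree is at most $q+w-\mathrm{Rank}(l^{(1)},\dots,l^{(q+w)})$: when the next hyperplane crossed is linearly dependent on those already used, its residue does not raise the $\log$-degree, so only $\mathrm{Rank}$-many independent directions are ``expensive''. Making this precise requires an induction on the combinatorics of the intersection poset of the hyperplanes, tracking the order of vanishing of $\prod_i l^{(i)}$ restricted to each coordinate subspace produced along the way.

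It then remains to bound the integral over the shifted contour $\re(s_j)=c_j-\eta$. This contour lies in $D(\delta_1-\eps,\delta_3-\eps')$, so hypothesis (3) gives $H(\bm{s})\ll\prod_i(\abs{\im(l^{(i)}(\bm{s}))}+1)^{1-\delta_2\min\{0,\re(l^{(i)}(\bm{s}))\}}(1+(\abs{\im s_1}+\dots+\abs{\im s_r})^\eps)$ there, while the kernel $\prod_j\widetilde{w}(s_j)$ decays faster than any polynomial; hence the shifted integrand is absolutely integrable over the full vertical space and the integral is $O\!\big(x^{\langle\bm{c},\bm{\beta}\rangle}\prod_j x^{-\beta_j\eta}\big)=O(x^{\langle\bm{c},\bm{\beta}\rangle-\theta})$ with $\theta=\eta\min_j\beta_j>0$. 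Finally I would remove the smoothing by a finite-difference argument: taking suitable iterated differences in the cutoff parameters recovers $\sum_{n_j\le x^{\beta_j}}f(\bm{n})$ up to an error absorbed into $O(x^{\langle\bm{c},\bm{\beta}\rangle-\theta})$ and replaces $Q_{\bm{\beta}}$ by another real polynomial of the same degree. I expect the combinatorial residue bookkeeping of the middle step — proving the sharp degree bound $q+w-\mathrm{Rank}$ while simultaneously controlling the uniformity in $\eps,\eps'$ so that every shifted multiple integral stays absolutely convergent — to be the main obstacle; the smoothing, de-smoothing, and error estimate are comparatively routine once (3) is in hand.
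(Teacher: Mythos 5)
The paper does not prove this statement at all: it is imported verbatim as Th\'eor\`eme 1 of de la Bret\`eche \cite{B}, so there is no internal proof to compare with, and any assessment has to be of your sketch on its own merits against the original argument. Your outline (smooth the cutoffs, $r$-fold Mellin inversion, shift contours left past the translated singular hyperplanes, collect iterated residues, bound the shifted integral via hypothesis (3), then de-smooth) is indeed the general Perron-type strategy behind results of this kind, but as written it has a genuine gap exactly where you flag it. The polar divisors $l^{(i)}(\bm{s})=0$ are in general \emph{not} coordinate hyperplanes (in the application they are of the form $s_{l_1}+s_{l_2}=0$), so shifting the contour in one variable at a time does not cross isolated poles at fixed points: the pole location in $s_{j}$ depends on the remaining variables, each residue is again an $(r-1)$-fold integral whose polar structure (which forms survive, with what multiplicity, on the subspace $l^{(i)}=0$) must be re-analyzed, and the claimed degree bound $q+w-\mathrm{Rank}(l^{(1)},\dots,l^{(q+w)})$ is precisely the outcome of this bookkeeping. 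Asserting that dependent forms ``do not raise the log-degree'' and deferring the rest to ``an induction on the intersection poset'' is a statement of the theorem's combinatorial content, not a proof of it; this is where de la Bret\`eche's paper does its real work (his induction also has to keep the shifted multiple integrals absolutely convergent using the $\delta_2$-exponent in (3), since on the shifted contour the exponent $1-\delta_2\min\{0,\re(l^{(i)})\}$ exceeds $1$ and only the rapid decay of the smoothing kernels saves integrability, uniformly at every stage of the induction).

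A second, smaller gap is the de-smoothing step, which you call routine. Because the main term has size $x^{\langle\bm{c},\bm{\beta}\rangle}(\log x)^{\deg Q_{\bm\beta}}$, sandwiching the sharp cutoff between smooth weights supported on $[0,1\pm\eta]$ produces an error of order $\eta\, x^{\langle\bm{c},\bm{\beta}\rangle}(\log x)^{O(1)}$, so $\eta$ must be taken as a negative power of $x$; then the Mellin transforms $\widetilde{w}$ depend on $\eta$, their decay constants degrade as $\eta\to 0$, and one must track this dependence through the contour-shift estimates to still end with a power saving $x^{-\theta}$. This is standard but needs to be carried out (non-negativity of $f$ is what makes the sandwich legitimate, and should be invoked explicitly). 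In short: your proposal is a reasonable roadmap in the spirit of the original proof, but the two steps that constitute the substance of de la Bret\`eche's theorem --- the residue/rank analysis for non-coordinate linear forms and the uniformity in the smoothing and in $\eps,\eps'$ --- are left unproved, so it does not yet amount to a proof. Given that the paper simply cites \cite{B}, the appropriate course here is either to cite the source as the paper does or to reproduce de la Bret\`eche's induction in detail.
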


The next theorem gives a determination of the precise degree of the polynomial $Q_{\bm{\beta}}$ appearing in the previous theorem. We denote by $\mathbb{R}_*^+$ the set of strictly positive real numbers, and the notation $con^*(\{l^{(1)},\dots,l^{(q)}\})$ means $\mathbb{R}_*^+ l^{(1)}+\dots+\mathbb{R}_*^+ l^{(q)}$.

\begin{thm}[{\cite[(iv) of Th\'{e}or\`{e}me 2]{B}}]
\label{thm:breteche2}
Let $f : \mathbb{N}^r \to \mathbb{R}$ be a non-negative function satisfying the assumptions of Theorem \ref{thm:breteche1}. Let $\bm{\beta} = (\beta_1,\dots,\beta_r) \in (0,\infty)^r$. Set $\mathcal{B} = \sum_{i=1}^r\beta_i \bm{e}_i^* \in \mathcal{LR}_r^+(\mathbb{C})$. If $Rank\left(l^{(1)},\dots,l^{(q+w)} \right)=n, H(\bm{0})\neq 0$ and $\mathcal{B} \in con^* \left(\{ l^{(1)},\dots,l^{(q+w)} \} \right)$, then $\operatorname{deg}(Q_{\bm{\beta}}) =  q + w - n$.
\end{thm}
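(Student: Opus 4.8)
This statement refines Theorem~\ref{thm:breteche1}: with $n=Rank(l^{(1)},\dots,l^{(q+w)})$, that theorem already gives $\deg(Q_{\bm\beta})\le q+w-n$, so what is needed is the reverse inequality, or equivalently, that the leading coefficient of $Q_{\bm\beta}$ (as a polynomial in $\log x$) does not vanish. The plan is to extract that leading coefficient from the proof of Theorem~\ref{thm:breteche1} as an explicit normalized multiple contour integral, and then to use the two additional hypotheses $H(\bm 0)\neq0$ and $\mathcal B\in con^*(\{l^{(1)},\dots,l^{(q+w)}\})$ to show it is nonzero.

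First I would revisit the Mellin--Perron machinery underlying Theorem~\ref{thm:breteche1}. Writing $N_f(\bm\beta,x)=\sum_{n_1\le x^{\beta_1}}\cdots\sum_{n_r\le x^{\beta_r}}f(n_1,\dots,n_r)$, one has a representation of $N_f(\bm\beta,x)$, up to a truncation error treated as in Theorem~\ref{thm:breteche1}, as $\frac1{(2\pi i)^r}$ times a multiple integral of $F(\bm s)\prod_{i=1}^r x^{\beta_i s_i}/s_i$ over vertical lines $\re(s_i)=c_i+\eta$. Performing the translation $\bm s\mapsto\bm s+\bm c$, substituting $F(\bm s+\bm c)=H(\bm s)/\prod_{i=1}^q l^{(i)}(\bm s)$, and noting that the factors $1/s_j$ for $j\in J$ are exactly the extra forms $l^{(q+i)}$ while the factors $1/(c_j+s_j)$ for $j\notin J$ assemble into a function $G(\bm s)$ holomorphic near $\bm 0$ with $G(\bm 0)=\prod_{j\notin J}c_j^{-1}\neq0$, one arrives at the main term
\[
\frac{x^{\langle\bm c,\bm\beta\rangle}}{(2\pi i)^r}\int\cdots\int\frac{H(\bm s)\,G(\bm s)}{\prod_{i=1}^{q+w}l^{(i)}(\bm s)}\,\prod_{i=1}^r x^{\beta_i s_i}\,d\bm s.
\]
Hypotheses~(2) and~(3) then let one push every contour into $D(\delta_1-\eps,\delta_3-\eps')$; the only singularities encountered are the poles along the central hyperplane arrangement $\{l^{(i)}(\bm s)=0\}_{1\le i\le q+w}$, all of which pass through $\bm 0$, so the main term is a local contribution at $\bm s=\bm 0$.

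Next I would isolate the leading term by the scaling $\bm s=\bm u/\log x$ on a small fixed polydisc about the origin, the complement contributing $O(x^{\langle\bm c,\bm\beta\rangle-\theta})$ via the growth bound~(3). Each of the $q+w$ forms scales by $\log x$ and the Lebesgue measure by $(\log x)^{-r}$; but only the $n$-dimensional subspace spanned by the $l^{(i)}$ carries poles, and in the complementary $(r-n)$ directions the integrand is holomorphic while $\prod_i x^{\beta_i s_i}=x^{\mathcal B(\bm s)}$ is constant there---since $\mathcal B\in con^*$ forces $\mathcal B$ into the span of the $l^{(i)}$---so those directions contribute only a bounded factor and no power of $\log x$. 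Changing variables accordingly, one obtains
\[
N_f(\bm\beta,x)=x^{\langle\bm c,\bm\beta\rangle}\Bigl(c_{\bm\beta}\,(\log x)^{q+w-n}+O\bigl((\log x)^{q+w-n-1}\bigr)\Bigr)+O\bigl(x^{\langle\bm c,\bm\beta\rangle-\theta}\bigr),
\]
\[
c_{\bm\beta}=H(\bm 0)\,G(\bm 0)\cdot\frac1{(2\pi i)^n}\int\frac{e^{\widetilde{\mathcal B}(\bm u)}}{\prod_{i=1}^{q+w}\widetilde l^{(i)}(\bm u)}\,d\bm u,
\]
where the tildes denote the forms read off in the $n$-dimensional spanned subspace and the contour is a product of vertical lines on which $\re(\widetilde l^{(i)})>0$ for every $i$ (the value being independent of the choice by Cauchy's theorem). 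Comparing with Theorem~\ref{thm:breteche1}, $c_{\bm\beta}$ is the top coefficient of $Q_{\bm\beta}$; since $H(\bm 0)\neq0$ and $G(\bm 0)\neq0$, the entire matter reduces to proving that the normalized cone integral $\mathcal I:=\frac1{(2\pi i)^n}\int e^{\widetilde{\mathcal B}(\bm u)}\prod_i\widetilde l^{(i)}(\bm u)^{-1}\,d\bm u$ is nonzero.

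This last point is where I expect the main difficulty to lie. Because $q+w\ge n$, the integrand decays like $|\bm u|^{-(q+w)}$ on the contour and $\mathcal I$ converges absolutely; it is the $n$-variable analogue of $\frac1{2\pi i}\int e^{\beta u}u^{-1}\,du$, and by the classical evaluation of such integrals---equivalently, by summing iterated one-variable residues along the flags determined by $n$-element independent subfamilies of the $\widetilde l^{(i)}$---one has
\[
\mathcal I=\mathrm{vol}_{q+w-n}\bigl(\{\bm\mu\in\R_{\ge 0}^{q+w}:\textstyle\sum_{i=1}^{q+w}\mu_i l^{(i)}=\mathcal B\}\bigr)
\]
when $\mathcal B$ lies in the closed cone generated by the $l^{(i)}$, and $\mathcal I=0$ otherwise. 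The fibre polytope on the right is the intersection of $\R_{\ge 0}^{q+w}$ with an affine subspace of dimension $q+w-n$ (as the $l^{(i)}$ have rank $n$), and the hypothesis $\mathcal B\in con^*(\{l^{(1)},\dots,l^{(q+w)}\})$ exhibits a point of this polytope all of whose coordinates are strictly positive; hence a full $(q+w-n)$-dimensional neighbourhood within the constraint subspace still lies in the orthant, so the polytope has positive $(q+w-n)$-dimensional volume and $\mathcal I>0$. Therefore $c_{\bm\beta}\neq0$, and $\deg(Q_{\bm\beta})=q+w-n$, as claimed. The remaining ingredients---justifying the contour displacements via the polynomial growth estimate~(3), estimating the non-local part of the integral, and checking that the $(r-n)$ kernel directions truly contribute only a constant---are routine adaptations of the estimates already carried out in the proof of Theorem~\ref{thm:breteche1}.
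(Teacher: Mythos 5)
The paper does not prove this statement at all: it is imported verbatim as part (iv) of Th\'eor\`eme 2 of de la Bret\`eche~\cite{B}, so the only ``paper proof'' to compare against is that citation. Your sketch does follow the same general strategy as the cited source: the content of the statement is the nonvanishing of the top coefficient of $Q_{\bm{\beta}}$, and de la Bret\`eche indeed identifies that coefficient as $H(\bm{0})$ times a positive geometric constant (a volume attached to the family $l^{(1)},\dots,l^{(q+w)}$ and the form $\mathcal{B}$), with positivity coming exactly from $\mathcal{B}\in con^*(\{l^{(1)},\dots,l^{(q+w)}\})$. So your final step --- the fibre polytope $\{\bm{\mu}\in\mathbb{R}_{\geq 0}^{q+w}:\sum_i\mu_i l^{(i)}=\mathcal{B}\}$ has a strictly positive point, hence positive $(q+w-n)$-dimensional volume --- is the right mechanism and matches the spirit of~\cite{B}.

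As a standalone proof, however, there are genuine gaps. First, the localization step is asserted, not proved: you claim that hypotheses (2) and (3) ``let one push every contour'' so that the main term is a purely local contribution at $\bm{s}=\bm{0}$. In several variables this is the hard technical core of~\cite{B}; hypothesis (3) only gives polynomial control of $H$ in the imaginary directions (with exponents that can exceed $1$ when some $\re(l^{(i)})<0$), so the shifted integrals are not obviously absolutely convergent and one cannot move all $r$ contours simultaneously past the hyperplane arrangement --- de la Bret\`eche instead proceeds by iterated one-variable residues with careful truncations, and your error terms $O(x^{\langle\bm{c},\bm{\beta}\rangle-\theta})$ and $O((\log x)^{q+w-n-1})$ are exactly what needs proof. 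Second, your convergence claim for the cone integral $\mathcal{I}$ is wrong as stated: the integrand does not decay like $\abs{\bm{u}}^{-(q+w)}$ along directions in which only some of the forms $\widetilde{l}^{(i)}$ grow (e.g.\ $n=2$ with forms $u_1,u_2,u_1+u_2$ and $u_1\to i\infty$, $u_2$ bounded), and in the borderline case $q+w=n$ the integral is not absolutely convergent at all; consequently the identification of $\mathcal{I}$ with the fibre-polytope volume via Laplace inversion requires an iterated or regularized formulation, plus continuity of the resulting piecewise-polynomial density at $\mathcal{B}$ (which again uses that $\mathcal{B}$ is interior to the cone). Third, the dismissal of the $r-n$ complementary directions as ``contributing only a bounded factor'' also rests on integrability in those directions, which hypothesis (3) alone does not give without the decompositions of~\cite{B}. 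In short: the skeleton is the right one and the positivity argument at the end is sound, but the analytic steps that make the leading coefficient equal to $H(\bm{0})G(\bm{0})\mathcal{I}$ are precisely where the work in the cited paper lies, and they are not supplied here.
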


\begin{proof}[Proof of Theorem \ref{asymptotic for S_k(X,Y)}]
From (\ref{f r-variable}), we can easily find that $f(n_1,\dots,n_{2k})$ is non-negative and in Lemma \ref{lem:convolution}, we proved that $f(n_1, \dots,n_k)$ has an absolutely convergent series $F(s)$ for $\re(s_j) > 1/2$ for all $1 \leq j \leq 2k$. This shows that $f(n_1, \dots,n_{2k})$ satisfies (1) of Theorem \ref{thm:breteche1}. 

Next, we write $\bm{1/2} = (1/2,\dots,1/2)$. Then $F(\bm{s} + \bm{1/2})$ is an absolutely convergent series for $\re(s_j)> 0$. Therefore, we define the function
\begin{align}
\label{H(s)}
H(\bm{s}):=F(\bm{s} + \bm{1/2})\left(\prod_{1 \leq j \leq 2k}s_j \right)\left( \prod_{1 \leq l_1 <l_2 \leq 2k}(s_{l_1}+s_{l_2}) \right).    
\end{align}
So, we take $\mathcal{L} = \{s_j \mid 1 \leq j \leq 2k\} \cup \{s_{l_1}+s_{l_2} \mid 1 \leq l_1<l_2 \leq 2k \}$ in (2) of Theorem \ref{thm:breteche1}.
Then, by Lemma \ref{lem:convolution} it can be rewritten as
\begin{align*}
    H(\bm{s})&=\left(\prod_{1 \leq j \leq 2k}\zeta(2s_j+1)s_j \right)\left( \prod_{1 \leq l_1 <l_2 \leq 2k}\zeta(s_{l_1}+s_{l_2}+1)(s_{l_1}+s_{l_2}) \right)E(\bm{s}+\bm{1/2}).
\end{align*}
For $j \in \{1,\dots,2k\}$, there exists $\delta_1 \in (0,1/4)$ such that $\zeta(2s_j+1)s_j$ has analytic continuation to the plane $\re(s_j)>-\delta_1$. Similarly, $\zeta(s_{l_1}+s_{l_2}+1)(s_{l_1}+s_{l_2})$ also has analytic continuation to the plane $\re(s_j)>-\delta_1$. 

Furthermore, $E(\bm{s}+\bm{1/2})$ is holomorphic in $\re(s_j)>-\delta_1$ for $1 \leq j \leq 2k$. Since $c_j = 1/2$ for all $1 \leq j \leq 2k$, we can take $h^{(j)}(\bm{s}) = s_j$ in the notation of Theorem \ref{thm:breteche1}. Then we put $\delta_3=\delta_1$. Therefore $f(n_1,\dots,n_{2k})$ also satisfies (2) of Theorem \ref{thm:breteche1}.

Finally, for $\re(s_j) > -1/4$, by applying the convexity bound of the Riemann zeta-function,
\begin{align*}
\zeta(s_j+1)s_j &\ll (\abs{s_j}+1)^{1- \frac{1}{2}\min \{0, \re(s_j)\}}, \\
\zeta(s_{l_1}+s_{l_2}+1)s_j &\ll (\abs{s_{l_1}+s_{l_2}}+1)^{1- \frac{1}{2}\min \{0, \re(s_{l_1}+s_{l_2})\}}
\end{align*}
hold. The above argument shows that $H(s)$ satisfies (3) of Theorem \ref{thm:breteche1} with $\delta_2 = 1/2$.

Since $q= k+\binom{k}{2} = k +k(2k-1), w=0$ and the rank of the linear forms in $\mathcal{L}$ is $k$, we have 
\[
\sum_{n_1,\dots,n_k \leq Y} f(n_1,\dots,n_k) = Y^\frac{k}{2} Q(\log Y) + O(Y^{\frac{k}{2}-\theta}),
\]
where $Q(\log Y)$ is a polynomial of degree at most $2k^2-k$.

The remained task is to determine the degree of the polynomial $Q$ by using Theorem \ref{thm:breteche2}. Since $\bm{c}=\bm{1/2}$, we know that $w=0$, and then $Rank\left(l^{(1)},\dots,l^{(q)} \right) =k$. Moreover as $s_j, s_{l_1}+s_{l_2} \to 0$, we have
\begin{align*}
    \zeta(2s_j+1)s_j \to 1/2, \quad \zeta(s_{l_1}+s_{l_2}+1)(s_{l_1}+s_{l_2}) \to 1.
\end{align*}
From the Euler product, it holds that $E(\bm{1/2})$ does not vanish. Hence $H(\bm{0})\neq 0$. At last, we see that $\mathcal{B} = \sum_{i=1}^{2k} \bm{e}_i^*(\bm{s}) \in con^* \left(\{ l^{(1)},\dots,l^{(q)} \} \right)$ for $\bm{e}_i^*(\bm{s}) = s_i$. Therefore by Theorem \ref{thm:breteche2} (2), we have $\operatorname{deg}(Q) = 2k^2-k$. Therefore, we complete the proof.
\end{proof}

\section{Proof of Theorem \ref{thm:3}}
In order to prove Theorem \ref{thm:3}, we follow the same strategy as Gao and Zhao~\cite{GZ24}. For a function $f(x)$, we recall that the Mellin transform $\widehat{f}(s)$ with $s \in \mathbb{C}$ is defined by
\begin{align*}
    \widehat{f}(s)= \int_0^\infty f(x)x^{s-1} dx.
\end{align*}

Let $\Psi_U$ be a smooth, non-negative function such that $\Psi_U(x) \leq 1$ for all $x$ and that $\Psi_U$ is supported on $(0.1)$ and that $\Psi_U$ satisfies
$\Psi(x) = 1$ for $x \in (1/U,1-1/U)$. 

\begin{lem}
\label{lem:GZ}
    Assume the GRH. With the notation as above, we have for $k \geq 1$,
    \begin{align*}
        \sum_{\substack{0< d \leq X \\ d \text{:odd, squarefree}}} \abs{\chi_{8d}(n)\left(1-\Psi_{X^{\frac{\Delta}{2}}}\left(\frac{n}{Y}\right)\right)}^{2k} \ll XY^k.
    \end{align*}
\end{lem}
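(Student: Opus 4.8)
The plan is to imitate the computation of Section~2. Write $U = X^{\Delta/2}$ and $a(n) = \mathds{1}_{n\le Y}\bigl(1 - \Psi_{U}(n/Y)\bigr)$, so that $0\le a(n)\le 1$ and $a(n) = 0$ unless $n\le Y/U$ or $Y(1-1/U) < n\le Y$; thus $a$ is supported on a set of at most $O(Y/U)$ positive integers, consisting of two intervals of length $\ll Y/U$, one near $0$ and one near $Y$. Since $\chi_{8d}$ is a real character, expanding the $2k$-th power and interchanging the order of summation gives
\[
\sum_{\substack{0<d\le X\\ d:\text{odd, squarefree}}}\abs{\sum_{n}\chi_{8d}(n)a(n)}^{2k}
= \sum_{n_1,\dots,n_{2k}} a(n_1)\cdots a(n_{2k})\sum_{\substack{0<d\le X\\ d:\text{odd, squarefree}}}\chi_{8d}(n_1\cdots n_{2k}),
\]
and I would estimate the inner sum over $d$ by the Granville--Soundararajan bound of Lemma~\ref{lem:GranvilleSoundararajan}, adapted to the relevant family of fundamental discriminants as in Section~2 (the congruence and squarefreeness conditions being removed by Möbius inversion): it contributes a main term of size $\ll X$ when $n_1\cdots n_{2k} = \square$ and an error $\ll_{\eps} X^{1/2+\eps}(n_1\cdots n_{2k})^{1/4}$ otherwise.

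For the main term one is reduced to bounding $X\cdot N$, where $N$ counts the tuples $(n_1,\dots,n_{2k})$ with every $n_i$ in the support of $a$ and $n_1\cdots n_{2k} = \square$. Write $n_1\cdots n_{2k} = s^2$. If $j$ of the $n_i$ lie in the interval near $Y$ and the other $2k-j$ lie in $(0, Y/U]$, then $s^2 \le Y^{2k}U^{-(2k-j)}$, so $s$ takes $\ll 1 + Y^{k}U^{-(k-j/2)}$ values; when $j = 2k$ the lower bound $s^2 \ge Y^{2k}(1-1/U)^{2k}$ improves this to $\ll 1 + Y^{k}U^{-1}$. Since each admissible $s$ gives at most $d_{2k}(s^2) \ll_{k,\eps} Y^{\eps}$ ordered factorisations into $2k$ parts ($d_{2k}$ being the $2k$-fold divisor function), summing over $0\le j\le 2k$ gives $N \ll_{k,\eps} Y^{\eps}\bigl(1 + Y^{k}U^{-1/2}\bigr)$. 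Hence the main term is $\ll X Y^{\eps} + X Y^{k+\eps}X^{-\Delta/4}$, which is $\ll XY^{k}$ once $\eps$ is taken small enough in terms of $\Delta$ (using $k\ge 1$).

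For the error term, bounding each summand trivially, using $(n_1\cdots n_{2k})^{1/4}\le Y^{k/2}$ on the support of $a$ and summing over the $\ll (Y/U)^{2k}$ relevant tuples, one gets a contribution $\ll_{k,\eps} X^{1/2+\eps}Y^{5k/2}X^{-k\Delta}$; this is $\ll XY^{k}$ precisely because $Y \ll \bigl(X/(\log X)^{2}\bigr)^{\frac{1}{3k}}$, by the same mechanism that controls the error terms in the proof of Theorem~\ref{asymptotic for S_k(X,Y)}. The step I expect to require the most care is the counting estimate for $N$, and especially the contribution of the ``mixed'' tuples having some coordinates near $0$ and some near $Y$: the point recorded above is that any square-product configuration pins the square root $s$ to a comparatively short range, so that such configurations are dominated by the essentially diagonal ones. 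Finally, we remark that this argument uses only Lemma~\ref{lem:GranvilleSoundararajan} and not the GRH; the hypothesis is retained here for consistency with~\cite{GZ24}.
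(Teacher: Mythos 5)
Your proposal is correct in substance, but it takes a genuinely different route from the paper: the paper does not prove this lemma at all, it simply cites Lemma 4.3 of Gao--Zhao \cite{GZ24}, whose argument is carried out under the GRH. Your proof instead recycles the machinery of Sections 2--3 of this paper (expand the $2k$-th power, apply the Granville--Soundararajan/P\'olya--Vinogradov bound for $\sum_{d\le X,\,d\ \mathrm{odd\ squarefree}}\left(\frac{8d}{n}\right)$ via M\"obius removal of the squarefree condition, and count square products $n_1\cdots n_{2k}=s^2$ with each $n_i$ confined to the two short windows where $1-\Psi_{U}$ lives). The counting step is sound: stratifying by how many coordinates lie near $Y$ does pin $s$ to $\ll 1+Y^kU^{-1/2}$ values (and $\ll 1+Y^kU^{-1}$ in the all-large case), and the divisor-function bound $d_{2k}(s^2)\ll_{k,\eps}Y^{\eps}$ closes the main term since $Y\le X$ and $\eps$ may be taken small against $\Delta$; the error term is comfortably absorbed thanks to the factor $U^{-2k}=X^{-k\Delta}$ together with $Y^{3k}\ll X$. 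Two remarks on what each approach buys. Your argument is unconditional (the GRH hypothesis in the statement is not needed), which is a genuine gain; on the other hand it only establishes the bound in the restricted range $Y^{3k}\ll X^{1-\delta}$-type (equivalently the range $Y\ll (X/(\log X)^2)^{1/(3k)}$ in which the paper actually invokes the lemma), whereas the cited Gao--Zhao lemma is designed to hold for the wider ranges of $Y$ relevant to their shifted-moment theorem, so your proof is a valid substitute here but not a proof of the lemma in its full imported generality. Minor points: the displayed statement in the paper is missing the sum over $n\le Y$ (your reading, with $a(n)=\mathds{1}_{n\le Y}(1-\Psi_U(n/Y))$, is the intended one), and the support size should be written $O(Y/U+1)$, the extra $+1$ contributing only terms trivially $\ll XY^k$.
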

\begin{proof}
    This is Lemma 4.3 in~\cite{GZ24}.
\end{proof}

\begin{lem}
\label{lem:GZ2}
    Assume that the GRH. With the notation as above, we have for $k \geq 1$,
    \begin{align*}
        &\sum_{\substack{0< d \leq X \\ d \text{:odd, squarefree}}} \abs{\sum_{n=1}^\infty \chi_{8d}(n)\Psi_{X^{\frac{\Delta}{2}}}\left(\frac{n}{Y}\right)}^{2k} \\
        &\quad \ll Y^k \sum_{\substack{0< d \leq X \\ d \text{:odd, squarefree}}} \abs{\int_{-X^\Delta}^{X^\Delta} \abs{L(1/2+it,\chi_{8d})}\frac{dt}{1+\abs{t}}}^{2k}+O\left( XY^k\right).
    \end{align*}
\end{lem}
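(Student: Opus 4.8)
Following the approach of Gao and Zhao~\cite{GZ24}, the plan is to replace the smoothed character sum by an integral of $L(s,\chi_{8d})$ along the critical line via Mellin inversion. Write $U = X^{\Delta/2}$. Since $\Psi_U$ is smooth and supported on $(0,1)$, the sum over $n$ is finite, and applying the Mellin inversion formula for $\Psi_U$ and interchanging summation and integration (justified by absolute convergence of $L(s,\chi_{8d})$ for $\re(s) = c > 1$) gives
\[
\sum_{n=1}^\infty \chi_{8d}(n)\Psi_U\!\left(\frac{n}{Y}\right) = \frac{1}{2\pi i}\int_{(c)} L(s,\chi_{8d})\,\widehat{\Psi}_U(s)\,Y^s\,ds.
\]
As $\chi_{8d}$ is non-principal, $L(s,\chi_{8d})$ is entire, so using this together with the polynomial growth of $L$ in vertical strips and the rapid decay of $\widehat{\Psi}_U(\sigma+it)$ in $t$ for $\sigma$ in a compact interval, I would move the line of integration to $\re(s) = 1/2$ without encountering any pole, obtaining the same identity with the contour $\re(s) = 1/2$.

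On that line $\abs{Y^s} = Y^{1/2}$, so
\[
\abs{\sum_{n=1}^\infty \chi_{8d}(n)\Psi_U\!\left(\frac{n}{Y}\right)} \ll Y^{1/2}\int_{-\infty}^{\infty} \abs{L(1/2+it,\chi_{8d})}\,\abs{\widehat{\Psi}_U(1/2+it)}\,dt.
\]
The key analytic input is a good bound for $\widehat{\Psi}_U$: integrating by parts $j$ times and using $\Psi_U^{(j)}(x) \ll_j U^j$ together with the fact that $\Psi_U^{(j)}$ is supported on two intervals of total length $\ll 1/U$, one obtains $\abs{\widehat{\Psi}_U(1/2+it)} \ll_j U^{j-1}(1+\abs{t})^{-j}$ uniformly in $U \geq 1$ and $t \in \R$. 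Taking $j = 1$ handles the range $\abs{t} \leq X^\Delta$, where $\abs{\widehat{\Psi}_U(1/2+it)} \ll (1+\abs{t})^{-1}$ and so this part is $\ll \int_{-X^\Delta}^{X^\Delta} \abs{L(1/2+it,\chi_{8d})}\frac{dt}{1+\abs{t}}$. For the tail $\abs{t} > X^\Delta = U^2$ I would take $j$ large (fixed, depending on $\Delta$ and $k$) and combine the Mellin bound with the convexity estimate $\abs{L(1/2+it,\chi_{8d})} \ll (X(1+\abs{t}))^{1/4+\eps}$; since $U^{j-1}/\abs{t}^j$ is then extremely small on this range, the tail integral is $\ll X^{-A}$ for any fixed $A > 0$.

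Putting the two ranges together and using $(a+b)^{2k} \ll_k a^{2k}+b^{2k}$ yields, for each $d$,
\[
\abs{\sum_{n=1}^\infty \chi_{8d}(n)\Psi_U\!\left(\frac{n}{Y}\right)}^{2k} \ll_k Y^{k}\left(\int_{-X^\Delta}^{X^\Delta} \abs{L(1/2+it,\chi_{8d})}\frac{dt}{1+\abs{t}}\right)^{2k} + Y^{k}X^{-2kA}.
\]
Summing over the $O(X)$ admissible $d$ gives the main term of the lemma, while the tail contributes $\ll Y^k X^{1-2kA} \ll XY^k$ as soon as $A \geq 1$. The step that needs genuine care is the Mellin estimate with the correct uniform dependence on $U = X^{\Delta/2}$ — this is exactly what makes $X^\Delta$ the right truncation point and prevents stray powers of $X$ from appearing — together with the (routine) verification that the contour shift is legitimate; the rest is bookkeeping.
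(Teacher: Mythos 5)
Your proposal is correct, and it is essentially the argument behind the result the paper invokes: the paper gives no proof beyond citing the proof of Theorem 1.3 in Gao--Zhao \cite{GZ24}, which proceeds exactly as you do, via Mellin inversion $\sum_n \chi_{8d}(n)\Psi_U(n/Y)=\frac{1}{2\pi i}\int_{(c)}L(s,\chi_{8d})\widehat{\Psi}_U(s)Y^s\,ds$, a shift to $\re(s)=1/2$, the bound $\widehat{\Psi}_U(1/2+it)\ll_j U^{j-1}(1+\abs{t})^{-j}$ to truncate at $\abs{t}\leq X^{\Delta}$, and $(a+b)^{2k}\ll_k a^{2k}+b^{2k}$ before summing over the $O(X)$ admissible $d$. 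The only caveat is cosmetic: the derivative bounds $\Psi_U^{(j)}\ll_j U^j$ on a support set of measure $\ll 1/U$ are part of the standard construction of $\Psi_U$ assumed "with the notation as above," so your key Mellin estimate is legitimate as stated.
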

\begin{proof}
    This has been shown in the proof of Theorem 1.3 in~\cite{GZ24}.
\end{proof}

Now we are ready to prove Theorem \ref{thm:3}. Let $X$ be a large number. By the same argument as in Theorem \ref{thm:1}, we have
\begin{align}
\label{S_8k lower bound}
    \sum_{\substack{0< d \leq X \\ d \text{:odd, squarefree}}} \abs{\sum_{n \leq Y}\chi_{8d}(n)}^{2k} \gg XY^k(\log X)^{2k^2-k}
\end{align}
for $Y$ satisfying $X^{\eps \frac{1}{3k}} \ll Y \ll \left(X/(\log X)\right)^{\frac{1}{3k}}$. 
By inserting  the function $\Psi_{X^{\frac{\Delta}{2}}}\left(\frac{n}{Y}\right)$, we get
\begin{align}
\begin{split}
\label{S_8k upper bound}
    & \sum_{\substack{0< d \leq X \\ d \text{:odd, squarefree}}} \abs{\sum_{n \leq Y}\chi_{8d}(n)}^{2k} \\
    & \quad \ll  \sum_{\substack{0< d \leq X \\ d \text{:odd, squarefree}}} \abs{\sum_{n=1}^\infty \chi_{8d}(n) \Psi_{X^{\frac{\Delta}{2}}}\left(\frac{n}{Y}\right)}^{2k} + \sum_{\substack{0< d \leq X \\ d \text{:odd, squarefree}}} \abs{\sum_{n \leq Y}
    \chi_{8d}(n) \left(1-\Psi_{X^{\frac{\Delta}{2}}}\left(\frac{n}{Y}\right)\right)}^{2k}.
\end{split}
\end{align}
By combining (\ref{S_8k lower bound}), (\ref{S_8k upper bound}), Lemma \ref{lem:GZ} and Lemma \ref{lem:GZ2}, we obtain
\begin{align*}
    \sum_{\substack{0< d \leq X \\ d \text{:odd, squarefree}}} \abs{\int_{-X^\Delta}^{X^\Delta} \abs{L(1/2+it,\chi_{8d})}\frac{dt}{1+\abs{t}}}^{2k} \gg X(\log X)^{2k^2-k}.
\end{align*}
We expand the left hand side in the above and use the assumption to obtain
\begin{align*}
    & \sum_{\substack{0< d \leq X \\ d \text{:odd, squarefree}}} \abs{\int_{-X^\Delta}^{X^\Delta} \abs{L(1/2+it,\chi_{8d})}\frac{dt}{1+\abs{t}}}^{2k} \\
    & =\int_{-X^\Delta}^{X^\Delta} \dots \int_{-X^\Delta}^{X^\Delta} \sum_{\substack{0<d\leq X \\ d:\text{odd, squarefree}}}\abs{L(1/2+it_1,\chi_{8d}) \dots L(1/2+it_{2k},\chi_{8d})}\frac{dt_1\cdots dt_{2k}}{\prod_{j=1}^{2k}(\abs{t_j}+1)} \\
    &\asymp F(X) \int_{-X^\Delta}^{X^\Delta} \dots \int_{-X^\Delta}^{X^\Delta} \frac{G_X(t_1,\dots,t_{2k})}{\prod_{j=1}^{2k}(\abs{t_j}+1)} dt_1\cdots dt_{2k}.
\end{align*}
Hence under the GRH we have 
\begin{align*}
\int_{-X^\Delta}^{X^\Delta} \dots \int_{-X^\Delta}^{X^\Delta} \frac{G_X(t_1,\dots,t_{2k})}{\prod_{j=1}^{2k}(\abs{t_j}+1)} dt_1\cdots dt_{2k} \gg \frac{X(\log X)^{2k^2-k}}{F(X)}.
\end{align*}
So we complete the proof of Theorem \ref{thm:3}.

\begin{ack} 
The author would like to thank Dr. Dilip Kumar Sahoo for informing the author of multiple Dirichlet series of Selberg's sense. The author would also like to thank Professor Kohji Matsumoto for his valuable comments. Some parts of this research were carried out during his stay at IISER Berhampur in India from June to July in 2024. The author would like to thank Professor Kasi Viswanadham Gopajosyula and the staff of IISER Berhampur for their support and hospitality. The author is supported by Grant-in-Aid for JSPS Research Fellow (Grant Number:24KJ1235).
\end{ack}

\end{document}